\newtheorem{theorem}{Theorem}[section]
\newtheorem{lemma}[theorem]{Lemma}
\newtheorem{corollary}[theorem]{Corollary}
\newtheorem{conjecture}[theorem]{Conjecture}
\theoremstyle{definition}
\numberwithin{equation}{section}
\numberwithin{theorem}{section}
\newcommand{\MRnumber}[1]{~{\bf MR~#1}}
\newcommand{\ZZ}{{\mathbb Z}}
\newcommand{\RR}{{\mathbb R}}
\newcommand{\One}{\chi}
\newcommand{\ff}{f\ast f}
\newcommand{\ffi}{\|\ff\|_\infty}
\newcommand{\fficonstant}{1.262}
\newcommand{\fficonstantpertwo}{0.631} 
\newcommand{\sigmaconstant}{1.258883}   
\newcommand{\Ks}{K_{\ss}}
 \DeclareMathOperator{\Spec}{Spec}
\begin{document}



\subjclass{Primary 42A85. Secondary 11P70, 11B83}



\title[The supremum of autoconvolutions]{The supremum of autoconvolutions, with applications to additive number theory}


%

\author{Greg Martin}
\address{University of British Columbia}
\email{gerg@math.ubc.ca}

\author{Kevin O'Bryant}
\address{City University of New York, College of Staten Island and Graduate Center}
\email{kevin@member.ams.org}


%

\thanks{The first author was supported in part by a grant from the Natural Sciences and Engineering Research Council of Canada. The second author was supported in part by a grant from The City University of New York PSC-CUNY Research Award Program.}

%

\begin{abstract}
We adapt a number-theoretic technique of Yu to prove a purely analytic theorem: if $f\in L^1({\mathbb R}) \cap L^2({\mathbb R})$ is nonnegative and supported on an interval of length $I$, then the supremum of $f\ast f$ is at least $0.631\, \|f\|_1^2/I$. This improves the previous bound of $0.591389\, \|f\|_1^2/I$. Consequently, we improve the known bounds on several related number-theoretic problems. For a set $A\subseteq\{1,2,\dots,n\}$, let $g$ be the maximum multiplicity of any element of the multiset $\{a_1+a_2\colon a_i\in A\}$. Our main corollary is the inequality $g n >  0.631\, |A|^2$, which holds uniformly for all $g$, $n$, and $A$.
\end{abstract}

\maketitle


\section{Introduction}\label{sec:Introduction}

One measure of the ``flatness'' of a nonnegative function $f$ is the ratio of its $L^\infty$ norm to its $L^1$ norm. If $f$ is supported on an interval of length $I$, then we trivially have $\|f\|_\infty \ge \|f\|_1/I$, and equality holds exactly if $f$ is the indicator function of the interval. However, it seems difficult for a convolution of nonnegative functions to be flat throughout its domain, and we seek an improved inequality that reflects this difficulty.

Define the autoconvolution of $f$ to be
    \[f^{\ast 2}(x):=f\ast f(x) := \int_{\RR} f(y) f(x-y)\,dy,\]
and for integers $h>2$ let $f^{\ast h}=f^{\ast(h-1)}\ast f$ be the $h$-fold autoconvolution. Since the $h$-fold autoconvolution is supported on an interval of length $hI$, we have the trivial lower bound $\|f^{\ast h}\|_\infty \ge \|f^{\ast h}\|_1/hI = \|f\|_1^h/hI$. Our main theorem shows that the constant can be improved for even integers $h\ge2$.

\begin{theorem} \label{thm:main}
    If $f\in L^1(\RR) \cap L^2(\RR)$ is nonnegative and supported on an interval of length $I$, and $h$ is an even positive integer, then \[\|f^{\ast h}\|_\infty> \frac{\fficonstant}{hI} \|f\|_1^h.\]
\end{theorem}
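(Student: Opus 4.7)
The first step is to reduce the even-$h$ case to the case $h = 2$. For $h = 2m$, set $g := f^{\ast m}$; then $g$ is nonnegative, supported on an interval of length $mI$, and satisfies $\|g\|_1 = \|f\|_1^m$. Since $f^{\ast h} = g \ast g$, the theorem applied in the case $h = 2$ to $g$ gives
\[\|f^{\ast h}\|_\infty \;=\; \|g \ast g\|_\infty \;>\; \frac{\fficonstant}{2\cdot mI}\|g\|_1^2 \;=\; \frac{\fficonstant}{hI}\|f\|_1^h,\]
exactly as required. By rescaling we may further assume $\|f\|_1 = 1$ and $\supp f \subseteq [0,1]$, so the goal reduces to $\|f \ast f\|_\infty > \fficonstantpertwo$.

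Set $M := \|f \ast f\|_\infty$. The trivial bound $M \ge 1/2$ comes from $\int f \ast f = 1$ and $|\supp(f \ast f)| \le 2$. Two elementary Cauchy--Schwarz facts sharpen this: an endpoint decay estimate $(f \ast f)(x) \le \int_0^x f(y)^2\,dy$ for $x \in [0,1]$ (and symmetrically for $x \in [1,2]$), and a self-bound $M \le \|f\|_2^2$. The main tool, adapted from Yu, is to integrate $f \ast f$ against a carefully chosen nonnegative weight $w$ supported on $[0,2]$. From $(f \ast f)(x) \le M$ we obtain
\[M \int_0^2 w(x)\, dx \;\ge\; \int_0^2 (f \ast f)(x)\, w(x)\, dx \;=\; \iint_{[0,1]^2} f(y)\, f(z)\, w(y+z)\, dy\, dz.\]
If we can find an auxiliary $\psi \ge 0$ on $[0,1]$ satisfying the AM--GM-type kernel domination $w(y+z) \ge \psi(y)\psi(z)$, then the double integral is bounded below by $\bigl(\int f\psi\bigr)^2$, and a pointwise lower bound $\psi \ge c$ would yield $M \ge c^2/\int w$. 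A constant $\psi$ recovers only the trivial $M \ge 1/2$, so the whole game is to choose a genuinely non-constant pair $(w,\psi)$ and to combine the resulting inequality with the endpoint decay and the self-bound so as to couple $M$ and $\|f\|_2^2$ in a way that forces $M$ to be large.

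The main obstacle, and the core technical content, will be solving the resulting variational problem and extracting the precise constant $\fficonstantpertwo$. I expect the extremal $\psi$ (and hence $w$) to satisfy an Euler--Lagrange equation whose solution admits an explicit but transcendental form, possibly involving elliptic integrals as hinted by the \texttt{EllipticK} operator declared in the preamble; the numerical value $\fficonstantpertwo$ should then drop out as the optimal ratio in that problem. Finally the strictness of the inequality should be handled at the end by observing that any $f$ achieving equality would have to be a limit of extremal configurations (for example, linear combinations of delta masses) lying outside $L^1(\RR)\cap L^2(\RR)$.
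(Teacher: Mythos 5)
Your reduction of the even-$h$ case to $h=2$ (applying the two-fold case to $g=f^{\ast h/2}$) is exactly right and matches the paper, as does the normalization to $\|f\|_1=1$ and support of length $1$, reducing the goal to $\|f\ast f\|_\infty>\fficonstantpertwo$. But from that point on you have not given a proof: the entire quantitative content --- how one actually gets the constant $\fficonstantpertwo$ rather than the trivial $1/2$ --- is deferred to an unsolved variational problem (``the main obstacle, and the core technical content, will be solving the resulting variational problem''), with only a guess that an Euler--Lagrange equation will produce the number. A scheme consisting of $M\int w\ge\iint f(y)f(z)w(y+z)\,dy\,dz$ together with a pointwise domination $w(y+z)\ge\psi(y)\psi(z)$ is, after the substitution, just the statement $M\int w\ge(\int f\psi)^2$, and there is no argument that any choice of $(w,\psi)$, even combined with the endpoint decay and the bound $M\le\|f\|_2^2$, can be pushed to $0.631$; this is essentially the older kernel-domination approach, which is known to stall well below that value. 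The closing remark about strictness via ``limits of extremal configurations'' is likewise not an argument.

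The missing idea, which is the heart of the paper's proof, is to work with $f\ast f+f\circ f$ rather than $f\ast f$ alone, and to exploit Fourier analysis at two different periods. One integrates $f\ast f+f\circ f$ against a kernel $K$ supported on $(-\delta,\delta)$: the upper bound combines the trivial estimate $\int(f\ast f)K\le\|f\ast f\|_\infty$ with a Parseval/Cauchy--Schwarz bound $\int(f\circ f)K\le1+\sqrt{\|f\ast f\|_\infty-1}\,\sqrt{\|K\|_2^2-1}$ (this is where $\|f\ast f\|_\infty$ and $\|K\|_2^2$ get coupled, playing the role you hoped $M\le\|f\|_2^2$ would play); the lower bound uses Parseval with period $u=\delta+\tfrac12$ to write the integral as $\tfrac2u+2u^2\sum_{j\ne0}(\Re\tilde f(j))^2\Re\tilde K(j)$, whose terms are nonnegative when $\tilde K(j)\ge0$, and then a second kernel (a Selberg-type majorant $G_{u,n}$ with finitely many Fourier coefficients) plus Cauchy--Schwarz converts the $j\ne0$ terms into an explicit positive gain. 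Solving the resulting quadratic inequality in $\sqrt{\|f\ast f\|_\infty-1}$ yields a bound strictly exceeding $\fficonstant$ for the normalization with support length $\tfrac12$, so strictness is automatic. Without the autocorrelation term (whose Fourier coefficients make the real parts cooperate), the two-period Parseval identities, and the explicit kernels $\Ks$ and $G_{u,n}$, your outline cannot produce the claimed constant, so as it stands the proposal has a genuine gap rather than an alternative proof.
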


By replacing $f(x)$ with $c_1f(c_2x)$, we may assume without loss of generality that $I=1/h$ and $\|f\|_1=1$, in which case $f^{\ast h}$ is supported on an interval of length 1 and the conclusion is simply
    \[\|f^{\ast h}\|_\infty > \fficonstant.\]
In the case $h=2$, this inequality improves the bound $\ffi\ge 1.182778$ of the authors \cite{ExperimentalMathematics}.

Before giving a one-paragraph summary of the proof of Theorem~\ref{thm:main}, we state some of its applications, which will be deduced from the theorem in Section~\ref {sec:Corollaries}. The main topic of \cite{ExperimentalMathematics} is to give upper and lower bounds on the constant appearing in Corollary~\ref{cor:measure}, a result in continuous Ramsey theory concerning {\em centrally symmetric sets}, which are sets $S$ such that $S=c-S$ for some real number $c$. As a consequence of Theorem~\ref{thm:main}, we obtain the following improved lower bound.

\begin{corollary}\label{cor:measure}
Every Lebesgue measurable subset of $[0,1]$ with measure $\epsilon$ contains a centrally symmetric subset with measure $\fficonstantpertwo\epsilon^2$.
\end{corollary}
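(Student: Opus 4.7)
The plan is to apply Theorem~\ref{thm:main} directly to the indicator function of $S$. Let $S\subseteq[0,1]$ be Lebesgue measurable with $m(S)=\epsilon$, and set $f=\chi_S$. Then $f$ is nonnegative, supported on $[0,1]$ (so $I=1$), lies in $L^1(\RR)\cap L^2(\RR)$, and satisfies $\|f\|_1=\epsilon$. Applying Theorem~\ref{thm:main} with $h=2$ yields
\[
\|f\ast f\|_\infty > \frac{\fficonstant}{2}\,\epsilon^2 = \fficonstantpertwo\,\epsilon^2.
\]

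The key observation linking the autoconvolution to the geometric content of the corollary is that the autoconvolution of an indicator function records the measures of centrally symmetric subsets of $S$. For each real $x$,
\[
f\ast f(x) = \int_\RR \chi_S(y)\chi_S(x-y)\,dy = m\bigl(S\cap(x-S)\bigr),
\]
and the set $T_x := S\cap(x-S)$ is centrally symmetric about $x/2$, since $x-T_x = (x-S)\cap S = T_x$. Moreover $T_x\subseteq S$ by construction. So every value of $f\ast f$ at a point $x$ is realized as the measure of a centrally symmetric subset of $S$.

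To complete the argument, I would note that since $f\in L^2(\RR)$, the autoconvolution $f\ast f$ is a continuous function on $\RR$, so its essential supremum coincides with its pointwise supremum. Combining this with the bound from Theorem~\ref{thm:main}, there exists some $x_0$ with $f\ast f(x_0) > \fficonstantpertwo\,\epsilon^2$, and then $T_{x_0}$ is a centrally symmetric subset of $S$ with measure exceeding $\fficonstantpertwo\,\epsilon^2$, as required. There is no real obstacle here: all the analytic work is done by Theorem~\ref{thm:main}, and the only thing to check is the identification of $f\ast f(x)$ with the measure of the centrally symmetric set $T_x$, together with the routine continuity of $\chi_S\ast\chi_S$.
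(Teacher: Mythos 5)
Your proof is essentially the same as the paper's: apply Theorem~\ref{thm:main} with $h=2$ to the indicator function and observe that $\chi_S\ast\chi_S(x)=\mu\bigl(S\cap(x-S)\bigr)$, the measure of a centrally symmetric subset of $S$ centered at $x/2$. Your extra remark that $\chi_S\ast\chi_S$ is continuous (so the essential supremum is achieved as a pointwise supremum) is a small careful touch that the paper leaves implicit, but the substance of the argument is identical.
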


Although Theorem~\ref{thm:main} is a purely analytic theorem, it has applications to discrete problems, specifically problems in additive number theory. If $A$ is a finite set, let $A+A$ be the multiset $\{a_1+a_2\colon a_i\in A\}$. We call the set $A$ a {\em $B^\ast[g]$ set} if no element of the multiset $A+A$ has multiplicity greater than $g$. When $g$ is even, it is common to call $A$ a $B_2[g/2]$ set. Theorem~\ref{thm:main} yields the following upper bound on the size of $B^\ast[g]$ sets:

\begin{corollary}\label{cor:Bgimprovement}
If $A\subseteq\{1,2,\dots,n\}$ is a $B^\ast[g]$ set, then \(\displaystyle |A| < \sigmaconstant \sqrt{g n}\).
\end{corollary}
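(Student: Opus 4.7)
The plan is to apply Theorem~\ref{thm:main} with $h=2$ to a suitable function $f$ attached to $A$, so that $\|f\ast f\|_\infty$ is controlled by $g$ while $\|f\|_1$ recovers $|A|$. The natural candidate places a unit-width indicator at each element of $A$: define
\[f(x) = \sum_{a \in A} \chi_{[a,\,a+1)}(x).\]
Then $f$ is a nonnegative step function in $L^1(\RR) \cap L^2(\RR)$ with $\|f\|_1 = |A|$, and its support is contained in $[1,n+1]$, an interval of length $I=n$.

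The key calculation is the upper bound $\|f\ast f\|_\infty \le g$. A direct computation gives $\chi_{[a,a+1)} \ast \chi_{[b,b+1)}(x) = \max\bigl(1 - |x - a - b - 1|,\,0\bigr)$, a triangular hat of height $1$ centered at $x=a+b+1$. Summing over pairs yields
\[f\ast f(x) = \sum_{s \in \ZZ} r_A(s)\,\Lambda(x - s - 1), \qquad \Lambda(t) := \max(1-|t|,0),\]
where $r_A(s) := \#\{(a,b) \in A \times A : a+b=s\}$. For any real $x$, at most two consecutive terms (those with $s = \lfloor x-1\rfloor$ and $s = \lfloor x-1\rfloor + 1$) are nonzero, and their nonnegative coefficients $1-\{x-1\}$ and $\{x-1\}$ sum to $1$. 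Hence $f\ast f(x)$ is a convex combination of two values of $r_A$, each of which is at most $g$ by the $B^\ast[g]$ hypothesis, giving $\|f\ast f\|_\infty \le g$.

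Applying Theorem~\ref{thm:main} with $h=2$ and $I=n$ then produces
\[g \;\ge\; \|f\ast f\|_\infty \;>\; \frac{\fficonstant}{2n}\,|A|^2,\]
which rearranges to $|A|^2 < \frac{2}{\fficonstant}\,gn = \sigmaconstant^2\,gn$, and hence $|A| < \sigmaconstant\,\sqrt{gn}$. Given Theorem~\ref{thm:main}, there is essentially no obstacle: the main analytic work has already been done, and the only things to check are the identification of the support length $I=n$ and the clean chaining of the strict inequality from the theorem with the (possibly non-strict) bound $\|f\ast f\|_\infty \le g$. The only conceptual choice to highlight is the use of \emph{unit-width} indicators rather than narrower bumps; narrower bumps would inflate $\|f\ast f\|_\infty$ without changing $\|f\|_1$ or $I$, losing precisely the factor needed to match $\sigmaconstant$.
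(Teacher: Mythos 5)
Your proof is correct and is essentially the paper's argument: the paper simply factors it through Corollary~\ref{cor:polynomials}, applying Theorem~\ref{thm:main} to the same sum of unit-width indicator bumps attached to $A$ (phrased as the polynomial $p(x)=\sum_{a\in A}x^a$ and the height $H(p^2)=g$), with the identical tent-function computation showing the sup of the autoconvolution equals the maximal representation count. The only cosmetic slip is writing $\tfrac{2}{\fficonstant}\,gn = \sigmaconstant^2\,gn$ (it is in fact slightly smaller), which only helps the final inequality $|A|<\sigmaconstant\sqrt{gn}$.
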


In~\cite{CillerueloVinuesa}, by comparison,  Cilleruelo and Vinuesa construct $B^\ast[g]$ sets with $|A|>(\frac{2}{\sqrt{\pi}}-\epsilon) \sqrt{gn}$, provided that $g$ is sufficiently large in terms of $\epsilon>0$, and $n$ is sufficiently large in terms of $g$ and $\epsilon$) . The previous best upper bound is Yu's~\cite{Yu}, who proved that for every fixed even integer $g\ge 2$,
    \[
    \limsup_{n\to\infty} \bigg( \max_{\substack{A\subseteq \{1,2,\dots,n\} \\ A \text{ is a $B^\ast[g]$ set}}} \frac{|A|}{\sqrt{gn}} \bigg) < 1.2649.
    \]
Our corollary improves on Yu's result in three ways. First, we have simplified and shortened the proof considerably. Second, we get a smaller constant. We do use a better kernel function---possibly even the optimal kernel---but that makes very little numerical difference. The bulk of the numerical improvement comes from a lower bound for a diagonal quadratic form in the real parts of Fourier coefficients, inspired by a Fourier coefficient bound used in \cite{2001.Green} and, in a different form, in \cite{ExperimentalMathematics}.
The third improvement is that Corollary~\ref{cor:Bgimprovement} is uniform---it applies to all $g$ and $n$. For example, the uniformity allows us to restate Corollary~\ref{cor:Bgimprovement} as:

\begin{corollary}\label{cor:inverseBg}
If $A\subseteq\{1,2,\dots,n\}$ has cardinality at least $\epsilon n$, then there is an element $s\in A+A$ with multiplicity greater than $\fficonstantpertwo \epsilon^2 n$.
\end{corollary}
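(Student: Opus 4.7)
The plan is to derive Corollary~\ref{cor:inverseBg} as a direct rearrangement of Corollary~\ref{cor:Bgimprovement}: the latter bounds $|A|$ from above in terms of $g$, while the former wants to bound $g$ from below in terms of $|A|$, so the two statements should essentially be contrapositive forms of each other. Concretely, I would let $g$ denote the \emph{actual} maximum multiplicity of any element of $A+A$. Then $A$ is, by definition, a $B^\ast[g]$ set (and $g\geq 1$), so Corollary~\ref{cor:Bgimprovement} applies and yields $|A| < \sigmaconstant\sqrt{gn}$.

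Squaring this and rearranging,
\[
g \;>\; \frac{|A|^2}{\sigmaconstant^{2}\, n} \;\geq\; \frac{(\epsilon n)^2}{\sigmaconstant^{2}\, n} \;=\; \frac{\epsilon^{2} n}{\sigmaconstant^{2}},
\]
where the second inequality uses the hypothesis $|A|\geq\epsilon n$. Since there exists $s\in A+A$ attaining multiplicity $g$, it remains only to observe that the two numerical constants in the paper are chosen so that $1/\sigmaconstant^{2} \geq \fficonstantpertwo$; indeed $\sigmaconstant = 1.258883$ gives $1/\sigmaconstant^{2} = 0.631000\ldots \geq 0.631 = \fficonstantpertwo$. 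Thus $g > \fficonstantpertwo\,\epsilon^{2} n$, as required.

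There is no real obstacle here: the content of Corollary~\ref{cor:inverseBg} is identical to that of Corollary~\ref{cor:Bgimprovement}, and the only thing to verify is the (essentially tight) numerical matching $\sigmaconstant = 1/\sqrt{\fficonstantpertwo}$. The uniformity of Corollary~\ref{cor:Bgimprovement} in $g$ and $n$ is what makes this immediate rephrasing legitimate—if that corollary were only asymptotic in $g$ or $n$, this deduction would require an extra argument to handle small parameter values.
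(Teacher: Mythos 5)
Your plan—rearranging the $B^\ast[g]$ bound with $g$ taken to be the actual maximum multiplicity—is the right idea and is essentially how the paper views these two corollaries, but the final numerical step fails. The constant $\sigmaconstant=1.258883$ is $1/\sqrt{\fficonstantpertwo}$ rounded \emph{up} (one has $1/\sqrt{0.631}=1.2588825\ldots$), so $\sigmaconstant^2\approx 1.5847864$ exceeds $1/0.631\approx 1.5847861$, and therefore $1/\sigmaconstant^2\approx 0.6309999<\fficonstantpertwo$. Your claim that $1/\sigmaconstant^{2}=0.631000\ldots\ge 0.631$ has the rounding in the wrong direction. Consequently, from the statement of Corollary~\ref{cor:Bgimprovement} alone you only get $g>|A|^2/(\sigmaconstant^2 n)\ge 0.6309999\,\epsilon^2 n$, which is strictly weaker than the asserted $g>\fficonstantpertwo\,\epsilon^2 n$; since the entire content of the corollary is the exact constant, and no integrality consideration can close a deficit of order $10^{-7}\epsilon^2 n$ for large $n$, this is a genuine (if tiny) gap.

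The paper avoids the issue by deriving Corollaries~\ref{cor:Bgimprovement} and~\ref{cor:inverseBg} in parallel from Corollary~\ref{cor:polynomials}, not one from the other: with $p(x)=\sum_{a\in A}x^a$ one has $p(1)=|A|$, $H(p^2)=g$, and $\deg(p)\le n-1$, so Corollary~\ref{cor:polynomials} gives the single unrounded inequality $\fficonstantpertwo<gn/|A|^2$; solving for $|A|$ yields Corollary~\ref{cor:Bgimprovement} (with the upward rounding absorbed into $\sigmaconstant$), while solving for $g$ yields Corollary~\ref{cor:inverseBg} with the exact constant $\fficonstantpertwo$. Your argument is repaired simply by quoting this underlying inequality (or Theorem~\ref{thm:main} directly) instead of the already-rounded constant of Corollary~\ref{cor:Bgimprovement}. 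Your closing observation about the uniformity in $g$ and $n$ being essential for this kind of rephrasing is correct and is exactly the point the paper emphasizes.
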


These discrete results can also be phrased in terms of {\em Newman polynomials}, which are polynomials all of whose coefficients are 0 or~1. Inequalities for Newman polynomials that relate $p(1)$, $\deg(p)$, and $H(p^2)$ have recently received some attention~\cites{Dubickas,twoauthors}; here $H(q)$ is the {\em height} of the polynomial $q$, defined to be $H(q):=\max\{|q_i| \colon 0\le i \leq \deg(q)\}$ if $q(x)=\sum_{i=0}^{\deg(q)} q_i x^i$. Note that $p(1)$, for a Newman polynomial $p$, counts the number of coefficients of $p$ that equal~1. The main quantity of interest is the ratio
\[R(p):=\frac{H(p^2)(\deg(p)+1)}{p(1)^2},\]
which we can bound from below for all polynomials with nonnegative real coefficients, not just Newman polynomials:

\begin{corollary}\label{cor:polynomials}
If $p$ is a polynomial with nonnegative real coefficients, then $R(p) > \fficonstantpertwo$.
\end{corollary}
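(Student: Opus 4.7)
My plan is to reduce Corollary~\ref{cor:polynomials} to the $h=2$ case of Theorem~\ref{thm:main} by encoding the coefficient sequence of $p$ as a step function. Write $p(x)=\sum_{i=0}^n p_i x^i$ with $n=\deg(p)$ and each $p_i\ge 0$, and define
\[ f := \sum_{i=0}^n p_i\,\One_{[i,i+1)}. \]
Since $f$ is bounded and compactly supported, $f\in L^1(\RR)\cap L^2(\RR)$; its support lies in an interval of length $I=n+1=\deg(p)+1$; and $\|f\|_1=\sum_{i} p_i=p(1)$.

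The key step is to compute $f\ast f$ at integer points. For each pair $(i,j)$, the function $x\mapsto \int \One_{[i,i+1)}(y)\,\One_{[j,j+1)}(x-y)\,dy$ is a unit-height triangular pulse supported on $[i+j,\,i+j+2]$ and peaked at $x=i+j+1$; in particular, its value at $x=k+1$ is $1$ if $i+j=k$ and $0$ otherwise. Summing over $i,j$ yields
\[ (f\ast f)(k+1) = \sum_{i+j=k} p_i p_j = [x^k]\,p(x)^2, \]
so taking the maximum over $k\in\{0,1,\ldots,2n\}$ gives the lower bound $\|f\ast f\|_\infty\ge H(p^2)$.

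Applying Theorem~\ref{thm:main} with $h=2$ to $f$ yields
\[ \|f\ast f\|_\infty > \frac{\fficonstant}{2(\deg(p)+1)}\,p(1)^2 = \frac{2\,\fficonstantpertwo}{2(\deg(p)+1)}\,p(1)^2 = \frac{\fficonstantpertwo}{\deg(p)+1}\,p(1)^2. \]
Combining with $\|f\ast f\|_\infty\ge H(p^2)$ and rearranging gives $R(p)=H(p^2)(\deg(p)+1)/p(1)^2>\fficonstantpertwo$, as claimed.

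I do not anticipate any serious obstacle: the entire argument is a transcription of Theorem~\ref{thm:main} through the step-function dictionary, and the only computational verification needed---the convolution identity at integer points---is a short calculation with indicators of unit intervals. The cleanness of this reduction also explains why Theorem~\ref{thm:main} and Corollary~\ref{cor:polynomials} share the constant $\fficonstantpertwo$: the step-function construction loses nothing beyond the slack that $\|f\ast f\|_\infty$ may be attained at a non-integer point, and the factor of $2$ in the denominator of Theorem~\ref{thm:main} (from $h=2$) is exactly what converts $\fficonstant$ into $\fficonstantpertwo$.
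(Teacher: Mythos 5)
Your construction is the right one (it matches the paper's up to a shift of the intervals), but the logic in your final step runs in the wrong direction. To conclude $H(p^2) > \fficonstantpertwo\, p(1)^2/(\deg(p)+1)$ from Theorem~\ref{thm:main}, you need an inequality of the form $H(p^2) \ge \|f\ast f\|_\infty$, i.e.\ an \emph{upper} bound on $\|f\ast f\|_\infty$ by $H(p^2)$. What you prove is $\|f\ast f\|_\infty \ge H(p^2)$, and combining that with the theorem's lower bound $\|f\ast f\|_\infty > \fficonstantpertwo\, p(1)^2/(\deg(p)+1)$ yields no information about $H(p^2)$ at all. Your closing remark that the construction ``loses nothing beyond the slack that $\|f\ast f\|_\infty$ may be attained at a non-integer point'' shows the misjudgment: if such slack existed, the reduction would not merely lose a constant, it would fail completely, because the quantity the theorem bounds from below would no longer control $H(p^2)$.

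The gap is easy to close, and the paper does exactly this: each pairwise convolution $\One_{[i,i+1)}\ast\One_{[j,j+1)}$ is the tent function $x\mapsto\max\{0,1-|x-i-j-1|\}$, whose corners lie at integers, so $f\ast f$ is piecewise linear with corners only at integers. Hence its supremum is attained at an integer point, which upgrades your one-sided computation to the equality $\|f\ast f\|_\infty = \max_{k\in\ZZ}(f\ast f)(k) = H(p^2)$. With that equality in hand, the rest of your argument (the choice of $I=\deg(p)+1$, $\|f\|_1=p(1)$, and the application of Theorem~\ref{thm:main} with $h=2$) goes through exactly as in the paper and gives $R(p)>\fficonstantpertwo$.
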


Berenhaut and Saidak \cite{twoauthors} have constructed a sequence of polynomials that yields $R(p)\to 8/9\approx 0.89$, and a construction of Dubickas \cite{Dubickas} yields
\mbox{$R(p) \to 5/6\approx 0.83$.}
The authors' work on $B^\ast[g]$ sets \cites{oldarXiv,ExperimentalMathematics} can be rephrased in terms of Newman polynomials; that work shows that if $\epsilon>0$, then there is a sequence of Newman polynomials with $p(1)/\deg(p) \to \epsilon$ and {$R(p)\to \pi /(1+\sqrt{1-\epsilon})^2$.} In particular, there is a sequence of Newman polynomials with $p(1)/\deg(p) \to 0$ and $R(p) \to \pi/4\approx 0.7854$. We conjecture that $\pi/4$ is the best possible constant.

The proof of Theorem~\ref {thm:main} proceeds by forming upper and lower bounds on the integral
    \[
    \int_{\RR} \big(f\ast f(x) +f \circ f(x)\big){K(x)}\,dx,
    \]
where $f\circ f$ is the autocorrelation of $f$ and $K$ is a kernel function to be chosen later. A trivial upper bound is used for $\int_{\RR} f\ast f(x){K(x)}\,dx$, while the upper bound on $\int_{\RR} f\circ f(x){K(x)}\,dx$ uses Parseval's identity to convert the integral into a sum over Fourier coefficients; the central coefficient is pulled out of the sum, and Cauchy-Schwarz is used to bound the remaining terms. The lower bound proceeds by using Parseval's identity, applied to Fourier coefficients with a period smaller than~1, to express the integral in terms of the Fourier coefficients of $f$ and $K$, and then bounding the resulting quadratic form in the real parts of the Fourier coefficients. This strategy reflects Yu's approach from~\cite{Yu} in two ways: noting that the Fourier coefficients of $f\ast f+f\circ f$ have nonnegative real parts, and using Fourier coefficients to two different periods.

After defining some notation and special kernel functions in Section~\ref {sec:Notation}, we prove Theorem~\ref{thm:main} in Section~\ref {sec:Proof}. The corollaries are deduced from the theorem in Section~\ref {sec:Corollaries}; we prove Corollaries~\ref {cor:measure} and~\ref {cor:polynomials} first and then derive the other two corollaries from Corollary~\ref {cor:polynomials}. Finally, in Section~\ref{sec:Improvements} we indicate the extent to which Theorem~\ref{thm:main} could be improved.

\section{Notation and kernel functions}\label{sec:Notation}


For any integrable function $g$, we use the notation
\[
g\ast g(x) := \int_\RR g(y) g(x-y)\, dy
\]
for its autoconvolution and
\[
g\circ g(x) := \int_\RR g(y) g(x+y)\, dy
\]
for its autocorrelation. We define its Fourier transform $\hat g$ by
\[
\hat g(\xi) := \int_\RR g(x) e^{-2\pi i\xi x}\, dx,
\]
and we note that the Fourier transforms of its autoconvolution and autocorrelation satisfy
\[
\widehat{g\ast g}(\xi) = \hat g(\xi)^2
    \quad\text{and}\quad \widehat{g\circ g}(\xi) = \big|\hat g(\xi)\big|^2.
\]
By a {\em probability density function}, or {\em pdf}, we mean a nonnegative function $g$ such that $\|g\|_1=1$. Note that for any pdf $g$, we have $\hat g(0)=1$.

Often we will speak of the $j$th ``Fourier coefficient'' of a function $g$ defined on the real line; by this we mean the $j$th Fourier coefficient of the periodization $\sum_{k\in\ZZ} g(x+k)$ of $g$. The notation $\hat g(j)$ represents two equal quantities: the Fourier transform of $g$ evaluated at the real number $j$, and the $j$th Fourier coefficient of the periodization of~$g$. In any case, the reader can generally replace occurrences of ``Fourier coefficient'' with ``value of the Fourier transform at integers'' without changing the meaning.

As mentioned in the introduction, our method involves using Fourier coefficients corresponding to a period smaller than~1, which we can introduce as a differently normalized Fourier transform. We let $\delta$ be a parameter that lies between 0 and $\frac14$ (we will at the end set $\delta=13/100$), and for notational convenience we set $u:=\delta+\frac12$. We define the differently normalized Fourier transform $\tilde g$ of an integrable function $g$ by
\[
\tilde g(\xi) :=\frac 1u \int_\RR g(x) e^{-2\pi i x \xi/u}\, dx.
\]
Note that in this notation,
\[
\widetilde{g\ast g}(\xi) = u\,\tilde g(\xi)^2
    \quad\text{and}\quad \widetilde{g\circ g}(\xi) = u\,\big|\tilde g(\xi)\big|^2,
\]
while $\tilde g(0)=\frac1u$ for any pdf $g$.

We will invoke Parseval's identity, which is technically a statement involving Fourier coefficients rather than values of Fourier transforms. The following lemma establishes the form of Parseval's identity that will suffice for our purposes.

\begin{lemma}
For $i\in\{1,2\}$, suppose that $g_i$ is a square-integrable function supported on $(-\alpha_i,\alpha_i)$. If $\alpha_1+\alpha_2 \le u$, then
\[
\int_\RR g_1(x) \overline{g_2(x)}\, dx = u \sum_{r\in\ZZ} \tilde g_1(r) \overline{\tilde g_2(r)}.
\]
\label{Parseval u}
\end{lemma}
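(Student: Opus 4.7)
My plan is to periodize each $g_i$ with period $u$, apply the classical Parseval identity on the torus $\RR/u\ZZ$, and then use the support hypothesis to identify the resulting inner product on the torus with the original integral over $\RR$.

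Set $G_i(x) := \sum_{n \in \ZZ} g_i(x + un)$, which is a well-defined $L^2$ function on $\RR/u\ZZ$ because each $g_i$ is square-integrable with compact support. A routine unfolding of the integral shows that the $r$th Fourier coefficient of $G_i$ on this circle, with normalization $\frac{1}{u}\int_{-u/2}^{u/2} G_i(x) e^{-2\pi i r x/u}\, dx$, equals $\tilde g_i(r)$, since shifting the integration interval by multiples of $u$ reassembles the full integral over $\RR$ that defines $\tilde g_i(r)$. The classical Parseval identity on $[-u/2,u/2]$ then gives
\[
\int_{-u/2}^{u/2} G_1(x) \overline{G_2(x)}\, dx = u \sum_{r \in \ZZ} \tilde g_1(r) \overline{\tilde g_2(r)}.
\]

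The remaining step, and the only place the hypothesis $\alpha_1 + \alpha_2 \le u$ enters, is to identify the left-hand side with $\int_\RR g_1(x) \overline{g_2(x)}\, dx$. Expanding each $G_i$ as a sum, interchanging sum and integral, shifting variables, and reindexing by $k = m - n$, I obtain
\[
\int_{-u/2}^{u/2} G_1(x) \overline{G_2(x)}\, dx = \sum_{k \in \ZZ} \int_\RR g_1(y) \overline{g_2(y + uk)}\, dy.
\]
For $k \ne 0$, the supports of $g_1(y)$ and of $y \mapsto g_2(y + uk)$ lie in $(-\alpha_1, \alpha_1)$ and $(-uk - \alpha_2, -uk + \alpha_2)$ respectively, and these are disjoint because $|uk| \ge u \ge \alpha_1 + \alpha_2$. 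Hence only the $k = 0$ term survives, and it equals exactly $\int_\RR g_1 \overline{g_2}$.

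I do not expect a serious obstacle here. The one subtle point worth flagging is that one does \emph{not} need each $g_i$ to fit individually inside a fundamental domain of period $u$ (that is, one does not need $2\alpha_i \le u$ separately): the periodizations $G_i$ may genuinely pick up multiple copies of $g_i$ when $2\alpha_i > u$. The weaker hypothesis $\alpha_1 + \alpha_2 \le u$ nonetheless suffices because what must vanish in the reindexed sum is only the cross-terms between $g_1$ and shifted copies of $g_2$, and it is precisely this separation that the hypothesis controls.
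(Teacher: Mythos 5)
Your proof is correct and takes essentially the same route as the paper: periodize with period $u$, check that the circle Fourier coefficients of the periodizations $G_i$ are the $\tilde g_i(r)$, apply classical Parseval on $[-\frac u2,\frac u2]$, and use $\alpha_1+\alpha_2\le u$ to recover $\int_\RR g_1\overline{g_2}$. The only (harmless) difference is in the last step: the paper assumes WLOG $\alpha_2\le\alpha_1$, so that $G_2=g_2$ on the fundamental domain and then $G_1=g_1$ on $(-\alpha_2,\alpha_2)$, whereas you unfold $\int_{-u/2}^{u/2}G_1\overline{G_2}$ into the shift sum $\sum_{k}\int_\RR g_1(y)\overline{g_2(y+uk)}\,dy$ and discard the $k\ne0$ terms by the same support-disjointness argument.
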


\begin{proof}
Without loss of generality, we assume that $u\ge \alpha_1 \ge \alpha_2>0$, so that $\alpha_2\le u/2$. Define $G_i(x) = \sum_{k\in\ZZ} g_i(ku+x)$, so that each $G_i$ is a $u$-periodic function and for $r\in\ZZ$
    \begin{align*}
    \tilde g_i(r)
        &=\frac 1u \sum_{k\in\ZZ} \int_{ku-u/2}^{ku+u/2} g_i(x) e^{-2\pi i x r/u}\,dx \\
        &=\frac 1u \sum_{k\in\ZZ} \int_{-u/2}^{u/2} g_i(ku+x) e^{-2\pi i (ku+x) r/u}\, dx \\
        &=\frac 1u \int_{-u/2}^{u/2} \left(\sum_{k\in\ZZ} g_i(ku+x)\right)\,e^{-2\pi i x r/u}\, dx\\
        &=\frac 1u \int_{-u/2}^{u/2} G_i(x)\,e^{-2\pi i x r/u}\, dx.
    \end{align*}
The fact that each $g_i$ is a square-integrable function with compact support implies that each $G_i$ is also square-integrable on any interval of length~$u$. Parseval's identity is thus applicable, giving
\[
\int_{-u/2}^{u/2} G_1(x) \overline{G_2(x)}\, dx = u \sum_{r\in\ZZ} \tilde g_1(r) \overline{\tilde g_2(r)}.
\]
Notice that $G_2(x) = g_2(x)$ throughout the interval $(-\frac u2,\frac u2)$, since $g_2$ is supported on $(-\alpha_2,\alpha_2)$. In particular,
\[
\int_{-u/2}^{u/2} G_1(x) \overline{G_2(x)}\, dx = \int_{-\alpha_2}^{\alpha_2} G_1(x) \overline{g_2(x)}\, dx.
\]
But notice that
\[
G_1(x) = \sum_{k\in\ZZ} g_1(x+ku) = g_1(x)  \quad\text{for }x\in(-\alpha_2,\alpha_2),
\]
since the support of any term $g_1(x+ku)$ with $k\ne0$ is contained in \mbox{$(u-\alpha_1,\infty)$} or $(-\infty,-u+\alpha_1)$, both of which are disjoint from \mbox{$(-\alpha_2,\alpha_2)$} due to the inequality $\alpha_1+\alpha_2\le u$. Therefore
\[
\int_{-\alpha_2}^{\alpha_2} G_1(x) \overline{g_2(x)}\, dx = \int_{-\alpha_2}^{\alpha_2} g_1(x) \overline{g_2(x)}\, dx = \int_\RR g_1(x) \overline{g_2(x)}\, dx,
\]
which establishes the lemma.
\end{proof}

We also use Parseval's identity with respect to the 1-periodic Fourier transform $\hat g$ in the following form:

\begin{lemma}
\begin{sloppypar}
If $g_1$ and $g_2$ are square-integrable functions supported on \mbox{$(-\frac12,\frac12)$}, then
    \[
    \int_\RR g_1(x) \overline{g_2(x)}\, dx = \sum_{r\in\ZZ} \hat g_1(r) \overline{\hat g_2(r)}.
    \]
In particular, $\|g_1\|_2^2 = \sum_{r\in\ZZ} |\hat g_1(r)|^2$.
\end{sloppypar}
\label{Parseval 1}
\end{lemma}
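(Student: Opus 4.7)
The plan is to mimic the proof of Lemma~\ref{Parseval u} with the parameter ``$u$'' replaced by $1$. The condition $\alpha_1+\alpha_2\le u$ from that lemma becomes $\alpha_1+\alpha_2\le 1$, which is guaranteed here because each $g_i$ is supported on $(-\tfrac12,\tfrac12)$ and so we may take $\alpha_1=\alpha_2=\tfrac12$. Since Lemma~\ref{Parseval u} is stated only for $u=\delta+\tfrac12$ with $0<\delta<\tfrac14$, I will reproduce the (essentially identical) argument for $u=1$ rather than quote it.

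First I would introduce the $1$-periodic functions $G_i(x):=\sum_{k\in\ZZ} g_i(x+k)$ for $i=1,2$. Because each $g_i$ is supported on $(-\tfrac12,\tfrac12)$, every translate $g_i(x+k)$ with $k\ne 0$ vanishes on $(-\tfrac12,\tfrac12)$, and hence $G_i(x)=g_i(x)$ for all $x\in(-\tfrac12,\tfrac12)$. In particular each $G_i$ is square-integrable on $(-\tfrac12,\tfrac12)$, so ordinary Parseval's identity applies to the $1$-periodic pair $(G_1,G_2)$.

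Next I would compute the $r$th (ordinary) Fourier coefficient of $G_i$. Interchanging the (finite, once restricted to $(-\tfrac12,\tfrac12)$) sum and integral, exactly as in the proof of Lemma~\ref{Parseval u}, gives
\[
\int_{-1/2}^{1/2} G_i(x)\,e^{-2\pi i r x}\,dx
= \sum_{k\in\ZZ}\int_{-1/2}^{1/2} g_i(x+k)\,e^{-2\pi i r x}\,dx
= \int_\RR g_i(y)\,e^{-2\pi i r y}\,dy = \hat g_i(r),
\]
using periodicity of $e^{-2\pi i r x}$ to absorb the shift by $k$. Thus the ordinary Fourier coefficients of $G_i$ are precisely the values $\hat g_i(r)$.

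Finally, classical Parseval on $L^2(-\tfrac12,\tfrac12)$ applied to $G_1$ and $G_2$ yields
\[
\int_{-1/2}^{1/2} G_1(x)\,\overline{G_2(x)}\,dx = \sum_{r\in\ZZ}\hat g_1(r)\,\overline{\hat g_2(r)}.
\]
Since $G_i(x)=g_i(x)$ on $(-\tfrac12,\tfrac12)$ and each $g_i$ vanishes off this interval, the left-hand side equals $\int_\RR g_1(x)\overline{g_2(x)}\,dx$, which is the first identity. The ``in particular'' statement follows by setting $g_2=g_1$. The only (mild) obstacle is the bookkeeping for the sum/integral interchange, which is routine given that each $g_i$ has compact support and is square-integrable.
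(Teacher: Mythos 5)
Your argument is correct: periodizing, identifying the ordinary Fourier coefficients of the periodization with $\hat g_i(r)$, and applying classical Parseval is exactly the $u=1$ specialization of the paper's proof of Lemma~\ref{Parseval u} (the paper itself states Lemma~\ref{Parseval 1} without proof, treating it as standard). Note that since each $g_i$ is supported on $(-\tfrac12,\tfrac12)$, only the $k=0$ term of the periodization survives on that interval, so the sum/integral interchange you flag is indeed trivial.
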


We now turn to describing two kernel functions that will appear in our proofs.
We will need a first kernel function $K$ with the following four properties:
        $K$ is a pdf;
        $K$ is square-integrable;
        $K$ is supported on $(-\delta,\delta)$;
        and $\tilde K(j)$ is real and nonnegative for all integers $j$.
We also need to be able to numerically compute $\|K\|_2^2$ and $\tilde K(j)$ for small $j$ to high accuracy; it will turn out that the primary consideration for $K$ is to have $\|K\|_2^2$ as small as possible.

We define our favored kernel $\Ks$ by setting
\begin{equation}
\Ks(x) := \tfrac1\delta {\ss}\circ {\ss}\big(\tfrac x\delta\big), \quad\text{where }
{\ss}(x) := \begin{cases}
                 \frac{2/\pi}{\sqrt{1-4x^2}}, & \text{if }{-\frac12} < x < \frac 12, \\
                 0, & \text{otherwise.}
               \end{cases}
\label{Ks definition}
\end{equation}
It is obvious that $\Ks$ is nonnegative and supported on $(-\delta,\delta)$, and simple calculus verifies that $\|\Ks\|_1=\|\ss\|_1^2=1$. Since $\Ks$ is defined as an autocorrelation, its Fourier transform $\tilde \Ks(j)$ is automatically real and nonnegative; in particular,
\begin{equation}
\tilde \Ks(j)= u\, | \tilde {\ss} (\delta j)|^2=\frac 1u\, \bigg| J_0\bigg( \frac{\pi \delta j}u \bigg) \bigg|^2,
\label{Ks coefficients}
\end{equation}
where $J_0(t)$ is the Bessel $J$-function of order~$0$. The fact that $|J_0(x)|< 1/\sqrt{x}$ for $x>0$ implies that $\sum_{j\in\ZZ} \tilde \Ks(j)^2$ converges; it then follows from Lemma~\ref{Parseval u} that $\Ks$ is square-integrable. Moreover, we can use the formula~\eqref{Ks coefficients} to accurately compute the $\tilde \Ks(j)$ numerically; we can also accurately compute $\|\Ks\|_2^2=\frac 1\delta \| {\ss} \circ {\ss}\|_2^2< 0.5747/\delta$ using
    \[{\ss} \circ {\ss}(x) = \begin{cases}
                         \frac{2}{\pi^2 |x|} E\big(1-\frac1{x^2}\big), & \text{if }|x|<1, \\
                         0, & \text{otherwise,}
                       \end{cases}\]
where $E(x)$ is the complete elliptic integral of the first kind. We note that in \cite{ExperimentalMathematics} the authors showed that any autocorrelation (or autoconvolution) supported on $(-\delta,\delta)$ has two-norm-squared at least $0.5745/\delta$, and so our particular kernel $\Ks$ is at least nearly optimal.

We will also need a second kernel function $G$ that is $u$-periodic and at least 1 on the interval $[-\frac14,\frac14]$ and that has few non-zero $\sim$-Fourier coefficients, all of which we can compute explicitly. (With respect to this kernel function only, we speak literally of its Fourier coefficients, since it is a legitimate $u$-periodic function.) For this we turn to Selberg's ``magic functions'':

\begin{lemma}
Let $\frac12<u<1$ be a real number and $n>\frac{2u}{2u-1}$ an integer, and define
\[
C_{u,n}(k):= \bigg( 1-\frac kn \bigg) \bigg( \cot \frac{\pi k}n \sin \frac{\pi k}{2u} + \cos \frac{\pi k}{2u} \bigg) + \frac1\pi \sin \frac{\pi k}{2u}
\]
and
\begin{equation}
G_{u,n}(x) := \frac{4u}{2un-2u-n} \sum_{k=1}^{n-1} C_{u,n}(k) \cos \frac{2\pi k x}u.
\label{Gun.def}
\end{equation}
Then $G_{u,n}(x)$ is $u$-periodic, even, and square-integrable on $[-\frac u2,\frac u2]$; moreover, $G_{u,n}(x)\geq 1$ for $-\frac14\leq x \leq \frac14$, and
\[
\tilde{G}_{u,n}(k) = \begin{cases}
         \displaystyle \frac{2uC_{u,n}(|k|)}{2un-2u-n}, & \text{if }1\le |k| < n, \\
         0, & \text{otherwise.}
         \end{cases}
\]
\label{magic.lemma}
\end{lemma}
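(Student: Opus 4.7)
The plan is to dispatch the three structural properties directly from the definition, derive the Fourier coefficient formula by orthogonality, and then establish the pointwise bound $G_{u,n}(x) \geq 1$ on $[-1/4, 1/4]$ via a Selberg-style majorant construction.

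The structural properties are immediate: definition \eqref{Gun.def} presents $G_{u,n}$ as a real finite linear combination of the functions $\cos(2\pi k x/u)$ for $1 \leq k \leq n-1$. Hence $G_{u,n}$ is $u$-periodic and even, and as a bounded trigonometric polynomial it is square-integrable on $[-u/2, u/2]$. The hypothesis $n > 2u/(2u-1)$ is exactly the condition making $2un - 2u - n > 0$, so the overall normalization constant is well-defined and positive.

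For the Fourier coefficients, I would substitute \eqref{Gun.def} into the defining integral $\tilde G_{u,n}(j) = \frac{1}{u}\int_{-u/2}^{u/2} G_{u,n}(x) e^{-2\pi i j x/u}\, dx$ and expand $\cos(2\pi k x/u) = \frac{1}{2}(e^{2\pi i k x/u} + e^{-2\pi i k x/u})$; the orthogonality relation
\[
\frac{1}{u}\int_{-u/2}^{u/2} e^{2\pi i (j-k)x/u}\,dx = \delta_{j,k}
\]
then picks out $k = \pm j$ for $1 \leq |j| \leq n-1$ (and gives $0$ otherwise), while the factor $\tfrac{1}{2}$ from the cosine combines with the prefactor $4u/(2un - 2u - n)$ to produce the claimed $2u C_{u,n}(|k|)/(2un - 2u - n)$.

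The substantive part is the inequality $G_{u,n}(x) \geq 1$ on $[-1/4, 1/4]$. The approach is to recognize the cosine coefficients in \eqref{Gun.def} as those arising from Selberg's construction of an extremal trigonometric majorant of the characteristic function of $[-1/4, 1/4]$ on the circle $\RR/u\ZZ$, truncated to degree $n-1$. Concretely, starting from Selberg's entire sign-majorant $B$ (whose Fourier transform is supported in $[-1, 1]$), one forms the even combination $\tfrac{1}{2}(B(\cdot - \tfrac{1}{4}) + B(\tfrac{1}{4} - \cdot))$ to majorize the indicator of $[-1/4, 1/4]$ on the line, periodizes to period $u$ (permissible since the bandwidth fits under $u$), subtracts the constant term, and applies Fejer weights $(1 - k/n)$ to truncate the bandwidth to $n-1$ while preserving the majorization on the target interval. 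The main obstacle is the closed-form coefficient identification: one must verify that the resulting trigonometric polynomial has cosine coefficients $(1-k/n)(\cot(\pi k/n)\sin(\pi k/(2u)) + \cos(\pi k/(2u))) + (1/\pi)\sin(\pi k/(2u))$ as stated in the definition of $C_{u,n}(k)$, and that the overall scaling $4u/(2un - 2u - n)$ brings the infimum over $[-1/4, 1/4]$ up to exactly $1$. Granted these coefficient computations, the pointwise bound $G_{u,n}(x) \geq 1$ on $[-1/4, 1/4]$ is built into the extremal character of the Selberg construction.
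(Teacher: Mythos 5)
Your handling of the routine parts (periodicity, evenness, square-integrability, and the Fourier coefficients via orthogonality) is fine and is exactly how the paper disposes of them: they follow directly from the definition \eqref{Gun.def}. The gap is in the only substantive claim, $G_{u,n}(x)\ge 1$ on $[-\frac14,\frac14]$. You propose to periodize the continuous Beurling--Selberg majorant, subtract the constant term, and then ``apply Fej\'er weights $(1-k/n)$ \dots while preserving the majorization on the target interval.'' That preservation is not automatic and is in general false: multiplying the $k$th coefficient by $(1-k/n)$ is convolution with the nonnegative Fej\'er kernel, which preserves a \emph{global} lower bound (such as nonnegativity) but not a \emph{local} one such as ``$\ge 1$ on $[-\frac14,\frac14]$,'' since the average at a point of the interval mixes in values from outside it, where the majorant is only $\ge 0$. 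The coefficient formula itself shows that $G_{u,n}$ is not a Fej\'er truncation of anything: in $C_{u,n}(k)$ the term $\frac1\pi\sin\frac{\pi k}{2u}$ is \emph{not} multiplied by the weight $(1-\frac kn)$. There is also a scaling slip: a function whose Fourier transform is supported in $[-1,1]$, periodized with period $u<1$, retains only its constant Fourier mode, so the object you start from would be constant. (A periodization route can be made to work by taking the continuous majorant of exponential type about $2\pi n/u$ and periodizing with \emph{no} subsequent truncation, but then the identification of its coefficients with $C_{u,n}(k)$ is precisely the computation you deferred.)

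What the paper does, following Montgomery's account of the Selberg/Vaaler construction, is to work with the finite polynomials directly: $B_K=V_K+\frac1{2(K+1)}\Delta_{K+1}$, where $V_K$ is Vaaler's trigonometric \emph{approximation} to the sawtooth and the Fej\'er kernel $\Delta_{K+1}$ is \emph{added} with a small positive coefficient to convert approximation into majorization; then $S_K^+(x)=\beta-\alpha+B_K(x-\beta)+B_K(\alpha-x)$ satisfies $S_K^+\ge\chi_{[\alpha,\beta]}$ by Vaaler's lemma. Taking $K=n-1$ and $\beta=-\alpha=\frac1{4u}$ and expanding the shifted sines and cosines gives exactly a cosine polynomial with constant term $\frac1{2u}+\frac1n$ and $k$th coefficient $\frac2n C_{u,n}(k)$; since $G_{u,n}(x)=\frac{2un}{2un-2u-n}\bigl(S_{n-1}^+(x/u)-(\frac1{2u}+\frac1n)\bigr)$, the bound $S_{n-1}^+\ge 1$ on $[-\frac1{4u},\frac1{4u}]$ rescales to $G_{u,n}\ge 1$ on $[-\frac14,\frac14]$, the normalization $4u/(2un-2u-n)$ being chosen to make the resulting constant exactly $1$. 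Your final subtract-and-rescale step agrees with this, but the majorization of the truncated polynomial, which you treat as ``built into the extremal character of the Selberg construction,'' is the heart of the lemma and requires the Vaaler-type additive correction, not Fej\'er averaging.
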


\noindent Specifically, we will use $G_{63/100,22}$; we note for the record that \[\min_{0\le x \le 1/4} G_{63/100,22}(x) > 1.006.\]

\begin{proof}[Proof of Lemma~\ref{magic.lemma}]
Up to changes of variables that are convenient to our present application, this lemma is directly from Montgomery's account~\cite{10}. Let $K$ be a positive integer. Define $e(u) := e^{2\pi iu}$ and $f(u) := -(1-u)\cot(\pi u) - 1/\pi$, and set \cite[Chapter 1, equations (16) and (18)]{10}
\begin{align*}
\Delta_K(x) &:= \sum_{k=-K}^K \Big( 1-\frac{|k|}K \Big) e(kx) ,\\
V_K(x) &:= \frac1{K+1} \sum_{k=1}^K f\Big( \frac k{K+1} \Big) \sin (2\pi k x).
\end{align*}
Then define \cite[Chapter 1, equation (20)]{10}
\[
B_K(x) := V_K(x) + \frac1{2(K+1)} \Delta_{K+1}(x)
\]
and, for any real numbers $\alpha$ and $\beta$ such that $\alpha\le\beta\le\alpha+1$, define \cite[Chapter 1, equation (21${}^+$)]{10}
\[
S_K^+(x) := \beta - \alpha + B_K(x-\beta) + B_K(\alpha-x).
\]
It follows from Vaaler's Lemma (see \cite[page 6]{10}) that $S_K^+(x) \ge \chi_{[\alpha,\beta]}(x)$; that is, $S_K^+(x) \ge0$ for all $x$ and $S_K^+(x) \ge 1$ for $\alpha\le x\le\beta$. We are interested in the special case $\alpha=-\beta$.

Notice that for any positive integer $n$ and any real number $\beta$, we have
\begin{align*}
V_{n-1}(x-\beta) + V_{n-1}&(-\beta-x) \\
&= \frac1n \sum_{k=1}^{n-1} f\Big( \frac kn \Big) \big( \sin (2\pi k(x-\beta)) + \sin(2\pi k(-\beta-x) \big) \\
&= \frac2n \sum_{k=1}^{n-1} f\Big( \frac kn \Big) \big( {- \sin (2\pi k\beta) \cos (2\pi k x)} \big) \\
&= \frac2n \sum_{k=1}^{n-1} \bigg( \Big(1-\frac kn \Big)\cot \frac{\pi k}n + \frac1\pi \bigg) \sin( 2\pi k\beta) \cos( 2\pi kx).
\end{align*}
Similarly, noting that
\[
\Delta_n(x) = 1 + \sum_{k=1}^n \Big( 1-\frac kn \Big) \big( e(kx) + e(-kx) \big) = 1 + 2 \sum_{k=1}^{n-1} \Big( 1-\frac kn \Big) \cos (2\pi kx),
\]
we have
\begin{align*}
\Delta_n(x-\beta) &+ \Delta_n(-\beta-x) \\
&= 2 + 2\sum_{k=1}^{n-1} \Big( 1-\frac kn \Big) \big( \cos (2\pi k(x-\beta)) + \cos (2\pi k(-\beta-x)) \big) \\
&= 2 + 4\sum_{k=1}^{n-1} \Big( 1-\frac kn \Big) \big( \cos (2\pi k\beta) \cos (2\pi kx) \big).
\end{align*}
Therefore \(B_{n-1} (x-\beta) +B_{n-1}(-\beta-x)\) is equal to
    \[V_{n-1}(x-\beta) + V_{n-1}(-\beta-x) + \frac1{2n} \big( \Delta_n(x-\beta) + \Delta_n(-\beta-x) \big),\]
which we expand into the form
    \[\frac1n + \frac2n \sum_{k=1}^{n-1} \bigg( \Big( 1-\frac kn \Big) \Big( \cot \frac{\pi k}n \sin (2\pi k\beta) + \cos (2\pi k\beta) \Big)
        + \frac{\sin(2\pi k\beta)}\pi \bigg) \cos (2\pi kx).\]
If we now set $\alpha:=-\frac1{4u}$ and $\beta:=\frac1{4u}$ for a real number $\frac12<u<1$, then the function
\begin{multline*}
S_{n-1}^+(x) = \frac1{2u} + \frac1n +\\
    \frac2n \sum_{k=1}^{n-1} \bigg( \Big( 1-\frac kn \Big) \Big( \cot \frac{\pi k}n \sin \frac{\pi k}{2u} + \cos \frac{\pi k}{2u} \Big) + \frac1\pi \sin \frac{\pi k}{2u} \bigg) \cos (2\pi kx)
\end{multline*}
satisfies $S_{n-1}^+(x) \ge 1$ for $|x|\le \frac1{4u}$.

We now stipulate that $n>2u/(2u-1)$, so that $2un-2u-n>0$. Because
\[
G_{u,n}(x) = \frac{2un}{2un-2u-n} \bigg( S_{n-1}^+\Big( \frac xu \Big) - \Big( \frac1{2u}+\frac1n \Big) \bigg),
\]
the inequality $S_{n-1}^+(x) \ge 1$ for $|x|\le \frac1{4u}$ implies that
\[
G_{u,n}(x) \ge \frac{2un}{2un-2u-n} \bigg( 1 - \frac{2u+n}{2un} \bigg) = 1 \qquad\text{for } \bigg|\frac xu\bigg| \le \frac1{4u} ;
\]
that is, $G_{u,n}(x) \ge 1$ for $|x|\le \frac14$ as claimed. The other properties of $G_{u,n}(x)$ follow directly from its definition~\eqref{Gun.def}.
\end{proof}

These kernel functions $\Ks$ and $G_{u,n}$ are quite good, but they have been chosen primarily for their computational convenience rather than for their optimality. For instance, there is no particular reason why an optimal $K$ would {\em have} to be an autocorrelation, and Selberg's functions enjoy many additional properties that are not relevant for our purposes.

\section{Proof of Theorem~\ref{thm:main}}\label{sec:Proof}

We first note that it suffices to prove Theorem~\ref{thm:main} in the case $h=2$: if $h\ge4$ is an even integer, then the function $f^{\ast h/2}$ is supported on an interval of length $hI/2$, and we can apply the theorem for twofold autoconvolutions to $f^{\ast h/2}$, obtaining the required lower bound for the $L^\infty$ norm of \mbox{$(f^{\ast h/2})^{\ast 2} = f^{\ast h}$}. Therefore we may assume that $h=2$ from now on; as noted after the statement of the theorem, we may also assume that $f$ is a pdf supported on an interval of length $\frac12$. In fact, we may assume that $f$ is supported on $({-\frac14},\frac14)$ by replacing $f(x)$ with $f(x-x_0)$ if necessary. For such a function $f$, we need to prove that $\|f\ast f\|_\infty > \fficonstant$.

As described earlier, the proof of Theorem~\ref {thm:main} proceeds by forming upper and lower bounds on the integral
    \[
    \int_{\RR} \big(f\ast f(x) +f \circ f(x)\big){K(x)}\,dx.
    \]
The simple Lemmas~\ref {lem:astupper} and~\ref {lem:circupper} provide the required upper bound, using standard inequalities and 1-periodic Fourier analysis (so the coefficients $\hat f(j)$ appear, for example). The lower bound is provided by the more complicated Lemmas~\ref {lem:equality lemma} and~\ref {lem:improvedlower}; the second kernel function $G$ makes its appearance in Lemma~\ref {lem:improvedlower}, as does $u$-periodic Fourier analysis (including the coefficients $\tilde f(j)$, for example). All four lemmas are stated with general kernels $K$ and $G$ and unspecified parameter $u$, so that other choices can be easily examined for possible improvements to the final outcome. Once these four lemmas are established, the proof of Theorem~\ref{thm:main} can be completed; it is here that we use the specific kernels $\Ks$ and $G_{u,n}$ (with $n=22$) and the specific value $u=63/100$ corresponding to $\delta=13/100$.

\begin{lemma}\label{lem:astupper}
For any pdf $K$, we have
    \( \int_{\RR} (f\ast f(x)){K(x)}\,dx \le \|f\ast f\|_\infty\).
\end{lemma}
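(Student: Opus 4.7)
The plan is to observe that this is essentially the $L^\infty$--$L^1$ duality inequality in its most elementary guise, made available by the two hypotheses on $K$: nonnegativity and total mass one. First I would note that $f$ is nonnegative (this is part of the hypotheses of Theorem~\ref{thm:main}, which is what Lemma~\ref{lem:astupper} is invoked for), so $f\ast f$ is also a nonnegative function. Combined with $K(x)\ge 0$, the integrand $(f\ast f)(x)K(x)$ is nonnegative everywhere, and bounded pointwise by $\|f\ast f\|_\infty K(x)$.

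The second step is to integrate this pointwise bound over $\RR$ and pull the constant $\|f\ast f\|_\infty$ out of the integral, yielding
\[
\int_{\RR} (f\ast f)(x)K(x)\,dx \le \|f\ast f\|_\infty \int_{\RR} K(x)\,dx.
\]
Finally, I would invoke the defining property of a pdf, namely $\|K\|_1=\int_{\RR} K(x)\,dx = 1$, to conclude that the right-hand side equals $\|f\ast f\|_\infty$, which is exactly the claimed inequality.

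There is no real obstacle here: the lemma is a one-line application of the bound $\int gK \le \|g\|_\infty \|K\|_1$ specialized to $g=f\ast f$ and the pdf $K$. The only minor point worth flagging is that $\|f\ast f\|_\infty$ is a valid essential supremum because $f\in L^1\cap L^2$ implies $f\ast f$ is continuous (in fact uniformly continuous and bounded), so the pointwise bound used in the first step is justified without measure-theoretic subtleties.
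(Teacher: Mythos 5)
Your proof is correct and is essentially the paper's own argument: the paper simply cites H\"older's inequality to get $\int (f\ast f)K \le \|f\ast f\|_\infty\|K\|_1$ and then uses $\|K\|_1=1$, which is exactly your pointwise-bound-and-integrate reasoning spelled out. The extra remarks about nonnegativity and continuity of $f\ast f$ are harmless but not needed, since $|f\ast f(x)K(x)|\le\|f\ast f\|_\infty|K(x)|$ already suffices.
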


\begin{proof}
H\"{o}lder's inequality immediately gives \[\int_{\RR} (f\ast f(x)){K(x)}\,dx \le \|f\ast f\|_\infty \|K\|_1,\] and $\|K\|_1=1$ by assumption.
\end{proof}

\begin{lemma}\label{lem:circupper}
Let $f$ be a square-integrable pdf that is supported on $(-\frac14,\frac14)$, and let $K$ be a square-integrable pdf that is supported on $(-\frac12,\frac12)$. Then
\[\displaystyle \int_{\RR} (f\circ f(x)){K(x)}\,dx  \le 1 +  \sqrt{\ffi-1}\, \sqrt{\|K \|_2^2-1}.\]
\end{lemma}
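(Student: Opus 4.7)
The plan is to convert the integral into Fourier space using Lemma~\ref{Parseval 1}, isolate the contribution of the zeroth Fourier coefficient, apply Cauchy--Schwarz to the tail, and then trade an $L^2$ bound for the desired $L^\infty$ bound via the trivial $\|g\|_2^2 \le \|g\|_1 \|g\|_\infty$.

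\textbf{Step 1: Fourier expansion.} Since $f$ is supported on $(-\tfrac14,\tfrac14)$, the autocorrelation $f\circ f$ is supported on $(-\tfrac12,\tfrac12)$, as is $K$ by hypothesis. Both are square-integrable (the former because $\widehat{f\circ f}(r)=|\hat f(r)|^2$ and $\sum_r |\hat f(r)|^4 \le (\sum_r |\hat f(r)|^2)^2 = \|f\|_2^4 < \infty$). Applying Lemma~\ref{Parseval 1} with $g_1=f\circ f$ and $g_2=K$, and using $\widehat{f\circ f}(r)=|\hat f(r)|^2$, gives
\[
\int_\RR (f\circ f)(x)\,K(x)\,dx = \sum_{r\in\ZZ} |\hat f(r)|^2\,\overline{\hat K(r)}.
\]

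\textbf{Step 2: Split off the zeroth term.} Since $f$ and $K$ are pdfs, $\hat f(0) = \hat K(0) = 1$, so the $r=0$ term equals $1$. For the remaining sum, apply Cauchy--Schwarz:
\[
\Bigl| \sum_{r\ne 0} |\hat f(r)|^2\,\overline{\hat K(r)} \Bigr|
\le \Bigl( \sum_{r\ne 0} |\hat f(r)|^4 \Bigr)^{1/2} \Bigl( \sum_{r\ne 0} |\hat K(r)|^2 \Bigr)^{1/2}.
\]

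\textbf{Step 3: Identify each factor.} The second factor is $\|K\|_2^2 - 1$ by Parseval (Lemma~\ref{Parseval 1} with $g_1=g_2=K$). For the first, note $\widehat{f\ast f}(r) = \hat f(r)^2$, and $f\ast f$ is also supported on $(-\tfrac12,\tfrac12)$, so another application of Lemma~\ref{Parseval 1} gives $\sum_{r}|\hat f(r)|^4 = \|f\ast f\|_2^2$; pulling out $r=0$ yields $\sum_{r\ne 0}|\hat f(r)|^4 = \|f\ast f\|_2^2 - 1$.

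\textbf{Step 4: Convert $L^2$ to $L^\infty$.} The only subtlety is that the conclusion is stated in terms of $\|f\ast f\|_\infty$, not $\|f\ast f\|_2^2$. This is handled by the elementary inequality
\[
\|f\ast f\|_2^2 \le \|f\ast f\|_1 \cdot \|f\ast f\|_\infty = \|f\|_1^2 \cdot \|f\ast f\|_\infty = \|f\ast f\|_\infty,
\]
using that $f$ is a pdf. Since $\|f\ast f\|_2^2 \ge |\hat f(0)|^4 = 1$, both sides of this inequality are at least $1$, and combining the steps gives the claimed bound. There is no substantive obstacle here; the only thing to watch is making sure the supports line up so that Lemma~\ref{Parseval 1} applies to both invocations.
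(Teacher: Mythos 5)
Your proof is correct and follows essentially the same route as the paper's: Parseval (Lemma~\ref{Parseval 1}) to pass to Fourier coefficients, isolating the $r=0$ term, Cauchy--Schwarz on the tail, identifying $\sum_{r\ne 0}|\hat f(r)|^4 = \|f\ast f\|_2^2-1$ via a second application of Parseval, and finally $\|f\ast f\|_2^2\le\|f\ast f\|_1\|f\ast f\|_\infty=\|f\ast f\|_\infty$. The only cosmetic difference is your justification that $f\circ f\in L^2$ (the paper simply notes it is a compactly supported convolution of an $L^2$ function with itself, hence bounded), which does not affect correctness.
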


\begin{proof}
Note that $f\circ f$ is supported on $(-\frac12,\frac12)$; also, $f\circ f$ is square-integrable, since $f$ is a square-integrable function with compact support. Therefore we may apply Lemma~\ref{Parseval 1}:
 \begin{multline*}
\int_\RR (f\circ f(x)){K(x)}\,dx = \int_\RR (f\circ f(x)) \overline{K(x)}\,dx = \sum_{r\in\ZZ} \widehat{f\circ f}(r)\overline{\hat{K}(r)} \\
= \sum_{r\in\ZZ} |\hat f(r)|^2\overline{\hat{K}(r)} = 1 + \sum_{r\not=0} |\hat{f}(r)|^2 \overline{\hat{K}(r)},
 \end{multline*}
since $\hat f(0)=\hat K(0)=1$. The Cauchy-Schwarz inequality now yields
    \begin{align*}
    \int_\RR (f\circ f(x)){K(x)}\,dx
        &\leq 1
            +\bigg( \sum_{r\not=0} |\hat{f}(r)|^4 \bigg)^{1/2} \bigg( \sum_{r\not=0} |\hat{K}(r)|^2 \bigg)^{1/2} \\
        &= 1
            +\bigg( \sum_{r\in\ZZ} |\hat{f}(r)|^4 -1  \bigg)^{1/2}
                \bigg( \sum_ {r\in\ZZ} |\hat{K}(r)|^2 -1\bigg)^{1/2} \\
        &= 1
            +\bigg( \sum_ {r\in\ZZ} |\widehat{f\ast f}(r)|^2 -1 \bigg)^{1/2}
                \bigg( \sum_ {r\in\ZZ} |\hat{K}(r)|^2 -1\bigg)^{1/2}.
    \end{align*}
Since $f\ast f$ is also square-integrable and supported on $(-\frac12,\frac12)$, two applications of Lemma~\ref{Parseval 1} now yield
    \begin{align*}
    \int_\RR (f\circ f(x)){K(x)}\,dx
        &\le 1
            +\left( \|{f\ast f}\|_2^2 - 1  \right)^{1/2}
                \left( \| {K} \|_2^2 - 1 \right)^{1/2} \\
        &\leq 1
            +\left(\ffi-1\right)^{1/2} \left(\|K \|_2^2-1 \right)^{1/2}
    \end{align*}
as claimed.
\end{proof}

Recall that $0<\delta<\frac14$ is a parameter and $u=\delta+\frac12$, and that $\tilde g$ refers to the Fourier transform normalized using the parameter~$u$.

\begin{lemma}\label{lem:equality lemma}
Let $f$ be a square-integrable pdf that is supported on $(-\frac14,\frac14)$, and let $K$ be a pdf that is supported on $(-\delta,\delta)$. Then
    \[
    \int_{\RR} \big(f\ast f(x)+f\circ f(x)\big){K(x)}\,dx = \frac 2u + 2u^2 \sum_{j\not=0} \big(\Re\tilde{f}(j) \big)^2 \Re\tilde{K}(j).
    \]
\end{lemma}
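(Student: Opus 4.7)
The plan is to apply Lemma~\ref{Parseval u} separately to each integrand $f \ast f \cdot K$ and $f \circ f \cdot K$, combine the two resulting sums, and then isolate the $r = 0$ term.

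First, I would verify the support hypotheses for Lemma~\ref{Parseval u}. Since $f$ is supported on $(-\tfrac14, \tfrac14)$, both $f \ast f$ and $f \circ f$ are supported on $(-\tfrac12, \tfrac12)$, while $K$ is supported on $(-\delta, \delta)$. The half-widths sum to $\tfrac12 + \delta = u$, so Lemma~\ref{Parseval u} applies to both products. Using the identities $\widetilde{f \ast f}(r) = u\, \tilde f(r)^2$ and $\widetilde{f \circ f}(r) = u\, |\tilde f(r)|^2$ recorded in Section~\ref{sec:Notation} (together with the fact that $K$ is real-valued, so $K = \overline{K}$), I obtain
\[
\int_{\RR} \bigl(f \ast f(x) + f \circ f(x)\bigr) K(x)\, dx = u^2 \sum_{r \in \ZZ} \bigl(\tilde f(r)^2 + |\tilde f(r)|^2\bigr) \overline{\tilde K(r)}.
\]

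Next, I would algebraically simplify each summand. Writing $\tilde f(r) = a + bi$ yields $\tilde f(r)^2 + |\tilde f(r)|^2 = 2a^2 + 2abi = 2 (\Re \tilde f(r))^2 + 2i\, \Re \tilde f(r)\, \Im \tilde f(r)$. Because the kernels $K$ used in the paper (in particular $\Ks$) are even — being autocorrelations — $\tilde K(r)$ is real, so $\overline{\tilde K(r)} = \Re \tilde K(r)$, and the purely imaginary cross-term $2i\, \Re \tilde f(r)\, \Im \tilde f(r)$ contributes nothing real. Thus
\[
\int_{\RR} \bigl(f \ast f(x) + f \circ f(x)\bigr) K(x)\, dx = 2 u^2 \sum_{r \in \ZZ} (\Re \tilde f(r))^2\, \Re \tilde K(r).
\]

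Finally, I would extract the $r = 0$ term. Since $f$ and $K$ are pdfs, $\tilde f(0) = \tilde K(0) = 1/u$, so the $r = 0$ summand equals $2 u^2 \cdot (1/u)^2 \cdot (1/u) = 2/u$, matching the constant in the lemma; the remaining sum over $r \ne 0$ gives the stated formula.

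This proof presents no genuine obstacle; it is bookkeeping around Parseval. The step that deserves the most care is the algebraic expansion of $\tilde f(r)^2 + |\tilde f(r)|^2$: it is critical that only the combination $2(\Re \tilde f(r))^2$ multiplies $\Re\tilde K(r)$, which is what turns the sum into a diagonal quadratic form in $\Re\tilde f(r)$ and drives the eventual application to Theorem~\ref{thm:main}. The vanishing of the imaginary cross-term is what makes the evenness of $K$ (and thus the reality of $\tilde K$) indispensable to the identity as written.
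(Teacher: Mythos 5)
Your proof is correct and takes essentially the same route as the paper's: Parseval for the $u$-normalized transform (Lemma~\ref{Parseval u}) applied to functions supported in widths summing to $\tfrac12+\delta=u$, the identities $\widetilde{f\ast f}=u\tilde f^{\,2}$ and $\widetilde{f\circ f}=u|\tilde f|^2$, the expansion $\tilde f(j)^2+|\tilde f(j)|^2=2(\Re\tilde f(j))^2+2i\,\Re\tilde f(j)\Im\tilde f(j)$, and extraction of the $j=0$ term. The only real difference is that the paper treats $f\ast f+f\circ f$ in one Parseval application and discards imaginary contributions by ``taking real parts term by term,'' a step which---exactly like your argument---tacitly uses that $\tilde K(j)$ is real (one of the standing kernel properties from Section~\ref{sec:Notation}; for a general non-even pdf $K$ an extra term $2u^2\sum_{j\ne 0}\Re\tilde f(j)\,\Im\tilde f(j)\,\Im\tilde K(j)$ would survive), so your making that reliance explicit is if anything a more honest reading of what is actually proved.
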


\begin{proof}
The function $f\ast f+f\circ f$ is square-integrable and supported on $(-\frac12,\frac12)$. Since the inequality $\frac12+\delta\le u$ is satisfied, we may apply Lemma~\ref{Parseval u} to obtain
    \begin{align*}
    \int_\RR \big(f \ast f(x)+f\circ f(x)\big){K(x)}\,dx
        &= \int_\RR \big(f \ast f(x)+f\circ f(x)\big)\overline{K(x)}\,dx \\
        &= u \sum_{j\in \ZZ} \big( \widetilde{f\ast f}(j)+ \widetilde{f\circ f}(j) \big) \overline{\tilde{K}(j)} \\
        &= u^2 \sum_{j\in \ZZ} ( \tilde{f}(j)^2+ |\tilde{f}(j)|^2)\tilde{K}(j).
    \end{align*}
As the left-hand side is real, we may take real parts term by term on the right-hand side:
    \begin{align*}
    \int_\RR \big(f \ast f(x)+f\circ f(x)\big){K(x)}\,dx
        &= u^2 \sum_{j\in \ZZ} \Re(\tilde{f}(j)^2+ |\tilde{f}(j)|^2) \Re\tilde{K}(j) \notag \\
        &= 2u^2 \sum_{j\in \ZZ} \big(\Re\tilde{f}(j) \big)^2 \Re\tilde{K}(j),
    \end{align*}
using the fact that $\Re(z^2+|z|^2) = 2(\Re z)^2$ for any complex number~$z$. Continuing,
    \begin{align}
    \int_\RR \big(f \ast f(x)+f\circ f(x)\big)K&(x)\,dx \notag \\
        &= 2u^2 \bigg( \big(\Re\tilde{f}(0) \big)^2 \Re\tilde{K}(0) +  \sum_{j\not=0} \big(\Re\tilde{f}(j) \big)^2 \Re\tilde{K}(j) \bigg) \notag \\
        &= \frac2u + 2u^2 \sum_{j\not=0} \big(\Re\tilde{f}(j) \big)^2 \Re\tilde{K}(j),
        \label {all terms nonnegative}
    \end{align}
since $\tilde f(0) = \tilde K(0) = \frac1u$. This establishes the lemma.
\end{proof}

We comment that Lemma~\ref {lem:equality lemma} implies
    \[
    \int_{\RR} \big(f\ast f(x)+f\circ f(x)\big){\Ks(x)}\,dx \ge \frac 2u,
    \]
since $\tilde \Ks(j)\ge 0$ for all $j$. Combining this lower bound with the upper bounds of Lemmas~\ref {lem:astupper} and~\ref {lem:circupper} applied with $K=\Ks$ gives
    \begin{multline*}
    \frac 2u \leq \int_{\RR} \big(f\ast f(x)+f\circ f(x)\big){\Ks(x)}\,dx \\
    \le \ffi + 1 + \sqrt{\ffi -1}\sqrt{0.5747/\delta-1}\,;
    \end{multline*}
setting $\delta := 0.1184$ and solving for $\ffi$ yields $\ffi \geq 1.25087$, already an improvement over earlier work.

We should be able to do better, of course, than simply throwing away all the $j\ne0$ terms on the right-hand side of Lemma~\ref {lem:equality lemma}. To do so and thus achieve the statement $\ffi \geq \fficonstant$ of Theorem~\ref{thm:main}, we utilize our second kernel function $G$ and the following lemma. Let $\Spec(G):=\{j\in\ZZ\colon \tilde{G}(j)\not=0\}$ denote the support of the $\sim$-Fourier series for the function~$G$.

\begin{lemma}\label{lem:improvedlower}
Let $f$ be a square-integrable pdf supported on $(-\frac14,\frac14)$. Let $G$ be an even, real-valued, square-integrable function that is $u$-periodic, takes positive values on $(-\frac14,\frac14)$, and satisfies $\tilde G(0)=0$. Let $K$ be a function supported on $(-\delta,\delta)$, with $\tilde K(j)\geq 0$ for all integers~$j$ and $\tilde K(j)>0$ for $j\in\Spec(G)$. Then
    \[u^2 \sum_{j\not=0} \big(\Re\tilde{f}(j) \big)^2 \Re\tilde{K}(j)
            \geq \left( \min_{0\leq x \leq 1/4} G(x) \right)^2\,\cdot\,\left( \sum_{j\in\Spec(G)} \frac{\tilde G(j)^2}{\tilde K(j)}\right)^{-1}.\]
\end{lemma}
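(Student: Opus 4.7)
The plan is to compare two evaluations of the quantity $\int_\RR f(x)\,G(x)\,dx$: a pointwise lower bound coming from the positivity of $f$ together with the lower bound on $G$ over $[-\tfrac14,\tfrac14]$, and a Fourier-side upper bound coming from a weighted Cauchy--Schwarz inequality. The desired inequality then falls out by squaring.

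First, set $m:=\min_{0\leq x\leq 1/4} G(x)$, and note that $G$ is even, so $G(x)\ge m$ on all of $[-\tfrac14,\tfrac14]$. Because $f$ is a nonnegative pdf supported on $(-\tfrac14,\tfrac14)$, this gives
\[
\int_\RR f(x)\,G(x)\,dx \;=\; \int_{-1/4}^{1/4} f(x)\,G(x)\,dx \;\geq\; m\int_{-1/4}^{1/4} f(x)\,dx \;=\; m.
\]
On the Fourier side, expand $G$ as a $u$-periodic Fourier series $G(x)=\sum_{j\in\Spec(G)}\tilde G(j)\,e^{2\pi i jx/u}$, integrate term by term against $f$, and recognize that $\int_\RR f(x)\,e^{2\pi i jx/u}\,dx = u\,\overline{\tilde f(j)}$. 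Since $G$ is real and even, each $\tilde G(j)$ is real, so taking real parts yields
\[
\int_\RR f(x)\,G(x)\,dx \;=\; u\sum_{j\in\Spec(G)} \tilde G(j)\,\Re\tilde f(j).
\]
(Alternatively, apply Lemma~\ref{Parseval u} to the periodization of $f$ on $(-u/2,u/2)$, which equals $f$ there since $\tfrac14<u/2$; because $\tilde G(0)=0$, the $j=0$ term drops out.)

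Now combine the two evaluations and apply Cauchy--Schwarz using the positive weights $\tilde K(j)$ on $\Spec(G)$:
\[
m \;\leq\; u\sum_{j\in\Spec(G)} \frac{\tilde G(j)}{\sqrt{\tilde K(j)}}\cdot \sqrt{\tilde K(j)}\,\Re\tilde f(j)
\;\leq\; u\biggl(\sum_{j\in\Spec(G)}\frac{\tilde G(j)^2}{\tilde K(j)}\biggr)^{1/2}\biggl(\sum_{j\in\Spec(G)}\tilde K(j)\,(\Re\tilde f(j))^2\biggr)^{1/2}.
\]
Squaring and rearranging yields the claimed bound on $u^2\sum_{j\in\Spec(G)}\tilde K(j)(\Re\tilde f(j))^2$. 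Finally, to obtain the form in the statement of the lemma, extend the sum from $\Spec(G)$ to all $j\neq 0$: since $\tilde K(j)=\Re\tilde K(j)\ge0$ for every integer $j$ and $0\notin\Spec(G)$, each extra term is nonnegative, so the sum only grows.

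The conceptually novel step is the second one, where a positivity-on-$[-\tfrac14,\tfrac14]$ statement is translated into a linear functional in the real parts $\Re\tilde f(j)$; the main technical nuisance will be justifying the termwise integration of the Fourier series of $G$ against $f$, but this is automatic once one notes that in the target application $\Spec(G)$ is finite, making the expansion a finite trigonometric sum. Everything else is a weighted Cauchy--Schwarz, which is why the right side of the inequality acquires the harmonic-mean-looking factor $\bigl(\sum \tilde G(j)^2/\tilde K(j)\bigr)^{-1}$.
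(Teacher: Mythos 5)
Your proposal is correct and follows essentially the same route as the paper: bound $\int fG$ below by $\min_{[0,1/4]}G$ using the pdf property and evenness of $G$, rewrite $\int fG$ via the $u$-periodic Parseval identity as $u\sum_{j\in\Spec(G)}\tilde G(j)\Re\tilde f(j)$, apply weighted Cauchy--Schwarz with weights $\tilde K(j)$, and enlarge the sum to all $j\neq0$ using $\tilde K(j)\ge0$ and $0\notin\Spec(G)$. The only cosmetic difference is that you justify the Fourier-side identity by termwise integration (or periodizing $f$), while the paper cites its Lemma~\ref{Parseval u} directly.
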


\begin{proof}
We observe that
    \begin{multline*}
        \min_{0\leq x \leq 1/4} G(x)
        = \left( \min_{-1/4 \leq x \leq 1/4} G(x) \right) \int_{-1/4}^{1/4} f(x)\,dx \\
        \le \int_{-1/4}^{1/4} f(x) G(x)\,dx
        = \int_\RR f(x) \overline{G(x)}\,dx,
    \end{multline*}
since $f$ is supported on $(-\frac14,\frac14)$. Lemma~\ref{Parseval u} then gives
    \begin{equation*}
        \min_{0\leq x \leq 1/4} G(x)
        \le u \sum_{j=-\infty}^\infty \tilde f(j) \overline{\tilde G(j)}.
    \end{equation*}
Taking real parts of both sides, and noting that $\tilde G(j)$ is real under the hypotheses on $G$, yields
    \begin{equation*}
        \min_{0\leq x \leq 1/4} G(x)
        \le u \sum_{j=-\infty}^\infty \Re \tilde f(j) \cdot \tilde G(j)
        = u\sum_{j\in\Spec(G)} \Re \tilde f(j) \cdot \tilde G(j).
    \end{equation*}
We now have, using Cauchy-Schwarz in the middle inequality,
    \begin{align*}
    \left( \min_{0\leq x \leq 1/4} G(x) \right)^2
        &\le u^2 \bigg( \sum_{j\in\Spec(G)} \Re \tilde f(j) \cdot \tilde G(j) \bigg)^2\\
        &= u^2 \bigg( \sum_{j\in\Spec(G)} \Re \tilde f(j) \sqrt{\tilde K(j)} \cdot \frac{\tilde G(j)}{\sqrt{\tilde K(j)}} \bigg)^2 \\
        &\leq u^2 \bigg( \sum_{j\in\Spec(G)} \big(\Re \tilde f(j)\big)^2 \tilde K(j) \bigg)\,
                \bigg( \sum_{j\in \Spec(G)} \frac{\tilde G(j)^2}{\tilde K(j)} \bigg)\\
        &\leq \bigg(u^2 \sum_{j\not=0} \big(\Re \tilde f(j)\big)^2 \tilde K(j) \bigg)\,
                \bigg( \sum_{j\in \Spec(G)} \frac{\tilde G(j)^2}{\tilde K(j)} \bigg),
    \end{align*}
where we have used the hypothesis that $\tilde K(j)\geq 0$ for all integers~$j$ and $\tilde K(j)>0$ for $j\in\Spec(G)$ in the last inequalities (as well as $\tilde G(0)=0$, so that $0\notin\Spec(G)$).
\end{proof}

\begin{proof}[Proof of Theorem~\ref{thm:main}]
We assume, as we have observed we may, that $h=2$ and that $f$ is a pdf supported on $({-\frac14},\frac14)$. We would like to apply Lemmas~\ref{lem:astupper}--\ref{lem:improvedlower} with the choices $\delta:=13/100$, $u:=\delta+1/2=63/100$, $K:=\Ks$ as defined in equation~\eqref{Ks definition}, and $G:=G_{u,22}$ as defined in equation~\eqref{Gun.def}. We already saw in Section~\ref{sec:Notation} that $\Ks$ is a square-integrable pdf supported on $(-\delta,\delta)$ whose $\sim$-Fourier coefficients are nonnegative; a calculation shows that $\tilde K(j)>0$ for $1\le|j|\le21$. We also know from Lemma~\ref{magic.lemma} that $G_{u,22}$ is an even, real-valued, square-integrable $u$-periodic function that takes positive values on $(-\frac14,\frac14)$; moreover, $\tilde G_{u,22}(0)=0$ and $\Spec(G_{u,22}) = \{j: 1\le|j|\le21 \}$. Therefore all the hypotheses of Lemmas~\ref{lem:astupper}--\ref{lem:improvedlower} are satisfied.

By Lemmas~\ref{lem:astupper} and~\ref{lem:circupper}, we have
\begin{align}
    \int_{\RR} \big(f\ast f(x) +f \circ f(x)\big)\Ks&(x)\,dx \notag \\
    &\leq \ffi + 1 + \sqrt{\ffi -1}\sqrt{\|\Ks\|_2^2-1} \notag \\
    &\le \ffi + 1 + 2L\sqrt{\ffi -1}, \label{2L}
\end{align}
where
    \[
    L:= 0.9248 > \frac12 \sqrt{0.5747/\delta-1} > \frac12 \sqrt {\|\Ks\|_2^2-1}.
    \]
(The constant in equation~\eqref{2L} has been called $2L$ rather than $L$ simply for later convenience.) On the other hand, by Lemmas~\ref{lem:equality lemma} and~\ref{lem:improvedlower}, we have
    \begin{multline*}
    \int_{\RR} \big(f\ast f(x) +f \circ f(x)\big){\Ks(x)}\,dx \\
    {}\geq \frac 2u +2\bigg( \min_{0\leq x \leq 1/4} G_{u,22}(x) \bigg)^2\,\cdot\,\bigg( \sum_{j\in\Spec(G_{u,22})} \frac{\tilde G_{u,22}(j)^2}{\tilde \Ks(j)}\bigg)^{-1} \ge R,
    \end{multline*}
where
    \begin{align*}
    R:=3.20874 &< \frac{2}{u}+ 2(1.006)^2 \bigg( 2 \sum_{j=1}^{21} \frac{(2uC_{u,22}(j)/(44u-2u-22))^2}{J_0(\pi \delta j/u)^2/u}\bigg)^{-1} \\
    &= \frac{2}{u}+ \frac{(1.006)^2 (22u-u-11)^2}{u^3} \bigg( \sum_{j=1}^{21} \frac{C_{u,22}(j)^2}{J_0(\pi \delta j/u)^2}\bigg)^{-1}.
    \end{align*}
Combining these inequalities yields
    \begin{equation}\label{eq:inequality}
    \ffi+1+2L\sqrt{\ffi-1} \ge R.
    \end{equation}

Using the notation $Q=\sqrt{\ffi-1}$, equation~\eqref{eq:inequality} becomes $(Q^2+1)+1+2LQ \ge R$, or equivalently $Q^2 + 2LQ + (2-R)\ge0$. The discriminant $4L^2-4(2-R)$ is nonnegative, and so $Q$ must be at least as large as the positive root of $x^2+2Lx+(2-R)$, which is $\big({-2L}+\sqrt{4L^2-4(2-R)}\big)/2$. This establishes that
    \[\sqrt{\ffi-1} \ge \sqrt{L^2+R-2}-L,\]
and it follows from the fact that $\sqrt{L^2+R-2}-L$ is nonnegative that
    \[\ffi \geq \left(\sqrt{L^2+R-2}-L\right)^2+1 > \fficonstant,\]
as claimed.
\end{proof}

\section{Deriving the corollaries}\label{sec:Corollaries}

We now derive Corollaries~\ref {cor:measure} and~\ref {cor:polynomials} from Theorem~\ref{thm:main}, then derive Corollaries~\ref {cor:Bgimprovement} and~\ref{cor:inverseBg} from Corollary~\ref {cor:polynomials}. We let
\[\One_S(x) :=
\begin{cases}1, &\text{if }x\in S,\\0, &\text{if }x\notin S
\end{cases}\]
denote the indicator function of a set $S$, and we let $\mu$ denote Lebesgue measure on~$\RR$.

\begin{proof}[Proof of Corollary~\ref{cor:measure}.]
Let $B$ be a set with measure $\epsilon$ supported in $[0,1]$, so that $\|\One_B\|_1 = \epsilon$.  Theorem~\ref{thm:main} applied with $h=2$ tells us that
    \[\|\One_B \ast \One_B\|_\infty > \frac{\fficonstant}{2\cdot 1}\,\|\One_B\|_1^2 = \fficonstantpertwo\epsilon^2.\]
On the other hand,
    \[\One_B \ast \One_B(x) = \int_{\RR} \One_B(y) \One_B(x-y)\,dy = \mu(C_x) ,\]
where $C_x = B \cap (x-B)$ is the largest centrally symmetric subset of $B$ with center $x/2$. Therefore $\|\One_B \ast \One_B\|_\infty$ is the measure of the largest centrally symmetric subset of $B$, which establishes the corollary.
\end{proof}

\begin{proof}[Proof of Corollary~\ref{cor:polynomials}.]
Suppose that $p(x)=\sum_{i=0}^{\deg(p)} p_i x^i$ with all of the $p_i$ nonnegative, and set
    \[g(x):=\sum_{i=0}^{\deg(p)} p_i \One_{(i-1/2,i+1/2)}(x).\]
Clearly $g$ is nonnegative and supported on the interval $({-\frac12},{\deg(p)}+\frac12)$, and $\|g\|_1 = \sum_{i=0}^{\deg(p)} p_i = p(1)$. Theorem~\ref{thm:main} applied with $h=2$ gives
    \begin{equation}\label{eqn:Newmancor}
      \|g\ast g\|_\infty > \frac{\fficonstant}{2I} \|g\|_1^2 = \frac{\fficonstantpertwo}{\deg(p)+1}p(1)^2.
    \end{equation}
However, note that
    \begin{align*}
    g\ast g(x) &= \sum_{i=0}^{\deg(p)} \sum_{j=0}^{\deg(p)} p_ip_j \One_{(i-1/2,i+1/2)}\ast\One_{(j-1/2,j+1/2)}(x) \\
                    &= \sum_{i=0}^{\deg(p)} \sum_{j=0}^{\deg(p)} p_ip_j \Lambda(x-i-j),
    \end{align*}
where $\Lambda(x):=\max\{0,1-|x|\}$ is a tent function with corners only at integers. In particular, $g\ast g$ is a piecewise linear function with corners only at integers, and so $\|g\ast g\|_\infty = \max\{g\ast g(k)\colon k\in\ZZ\}$. Moreover, for $k$ an integer
    \[g\ast g(k) = \sum_{i=0}^{\deg(p)} \sum_{j=0}^{\deg(p)} p_ip_j \Lambda(k-i-j) = \mathop{\sum_{i=0}^{\deg(p)} \sum_{j=0}^{\deg(p)}}_{i+j=k} p_i p_j\]
(since $\Lambda(0)=1$ while $\Lambda(n)=0$ for every nonzero integer $n$), which is exactly the coefficient of $x^k$ in $p(x)^2$. Therefore $\|g \ast g\|_\infty = H(p^2)$, and so the inequality~\eqref {eqn:Newmancor} is equivalent to the assertion of the corollary.
\end{proof}

\begin{proof}[Proof of Corollaries~\ref{cor:Bgimprovement} and~\ref{cor:inverseBg}.]
Fix $A\subseteq\{0,1,\dots,n-1\}$, and let $g$ be the maximum multiplicity of an element of the multiset $A+A$, so that $A$ is a $B^\ast[g]$ set. Define $p(x)=\sum_{a\in A} x^a$, and observe that $p(1)=|A|$. Note that the coefficient of $x^k$ in $p(x)^2$ is exactly the multiplicity of $k$ as an element of $A+A$, so that $H(p^2)=g$. Corollary~\ref{cor:polynomials} then yields
\[
\fficonstantpertwo < R(p) = \frac{H(p^2)(\deg(p)+1)}{p(1)^2} \le \frac{gn}{|A|^2},
\]
since $\deg(p)\le n-1$. Solving this inequality for $|A|$ establishes Corollary~\ref{cor:Bgimprovement}; on the other hand, setting $|A|=\epsilon n$ and solving the inequality for $g$ establishes Corollary~\ref{cor:inverseBg}.
\end{proof}

\section{Open problems}\label{sec:Improvements}

We conclude by mentioning a few open problems associated to twofold autoconvolutions. The function
\[
h(x)=\begin{cases}
1/\sqrt{2x}, &\text{if }0<x<1/2, \\
0, &\text{otherwise}
\end{cases}
\]
is a pdf whose autoconvolution is
\[
h\ast h(x)=\begin{cases}
\pi/2, &\text{if }0<x\le1/2, \\
\pi/2-2\arctan\sqrt{2x-1}, &\text{if }1/2<x<1, \\
0, &\text{otherwise.}
\end{cases}
\]
In particular, $\|h\ast h\|_1=\|h\|_1^2=1$, while $\|h\ast h\|_\infty = \pi/2$ and $\|h\ast h\|_2^2 = \log4$ (here $\log$ is the natural logarithm). We believe that this function is extremal in two ways. First, this function demonstrates that the constant $\fficonstant$ in Theorem~\ref{thm:main} cannot be increased beyond $\pi/2\approx1.5708$, and we believe that this constant represents the truth:

\begin{conjecture}\label{conj:piover2}
If $g\in L^1(\RR) \cap L^2(\RR)$ is nonnegative and supported on an interval of length $I$, then
\[
\|g\ast g\|_\infty \geq \frac{\pi/2}{2I}\|g\|_1^2.
\]
\end{conjecture}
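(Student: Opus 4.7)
By the scaling reduction already used for Theorem~\ref{thm:main}, it suffices to consider a nonnegative pdf $g \in L^1(\RR) \cap L^2(\RR)$ supported on $[0, 1/2]$ and to prove $\|g\ast g\|_\infty \geq \pi/2$. Observe first that the conjectured extremal $h(x) = 1/\sqrt{2x}$ lies in $L^1$ but \emph{not} in $L^2$; hence $\pi/2$ can only be approached, never attained, inside the admissible class, and any proof must be organized around a minimizing sequence rather than an actual minimizer.

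My plan is a Lagrange-multiplier / first-variation analysis. For a pdf $g$ with $M := \|g\ast g\|_\infty$ close to $\pi/2$, let $S := \{x : g\ast g(x) = M\}$ denote the active set. Testing against admissible perturbations $g \mapsto g + t\eta$ with $\int\eta = 0$ and $\eta \geq 0$ on $\{g = 0\}$, the KKT conditions for this min-max problem should produce a probability measure $\nu$ concentrated on $S$ such that $\nu \ast g$ is constant on $\supp g$. In the conjectured extremal configuration, $S = [0,1/2]$ and $\nu$ is uniform, so this identity degenerates to $g \ast g(x) = M$ on $(0,1/2)$. The latter is an Abel convolution equation: Abel inversion forces $g(x) = c x^{-1/2}$ on $(0,1/2)$, and the normalization $\|g\|_1 = 1$ combined with the classical identity $\int_0^x dy/\sqrt{y(x-y)} = \pi$ then pins down $c = 1/\sqrt{2}$ and $M = \pi/2$, recovering $h$.

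The essential obstacle is therefore the \emph{structural step}: showing that a near-minimizing $g$ is indeed forced into this configuration, i.e.\ that the active set $S$ fills essentially the whole interval $[0, 1/2]$. Heuristically, if $g\ast g$ were bounded well away from $M$ on a subinterval of positive measure, one ought to be able to thicken $g$ on a corresponding preimage and strictly decrease $M$, contradicting near-optimality; but one must also rule out stable critical configurations with smaller $S$ (for instance, a scaled indicator function on $[0,1/4]$ is a natural competitor). Making this quantitative is precisely where the kernel machinery of this paper begins to lose sharpness: the extremizer's absence from $L^2$ blocks the direct use of the Parseval identities in Lemmas~\ref{lem:equality lemma} and~\ref{lem:improvedlower}, since any Fourier-side truncation carries an error that does not vanish as one approaches the extremizer. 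I expect any successful argument to require either a rearrangement inequality specifically tailored to autoconvolutions, or a genuine duality that treats the pair $(g, \nu)$ on equal footing --- a feature conspicuously absent from the hand-chosen-kernel approach used here.
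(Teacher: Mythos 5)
This statement is Conjecture~\ref{conj:piover2}, an \emph{open problem}: the paper offers no proof of it, only the example $h(x)=1/\sqrt{2x}$ showing the constant $\pi/2$ cannot be increased, together with the authors' stated belief that it is sharp. Your submission is likewise not a proof but a research plan, and you say so yourself: the ``essential obstacle,'' namely showing that a near-minimizing $g$ is forced into the configuration where the active set $\{x: g\ast g(x)=\|g\ast g\|_\infty\}$ fills essentially all of the support of $g\ast g$ on one side, is left entirely open. That step is the whole content of the conjecture. What you do carry out --- the Euler--Lagrange heuristic plus Abel inversion of $g\ast g\equiv M$ on $(0,\tfrac12)$, recovering $g(x)=cx^{-1/2}$ and $M=\pi/2$ --- only identifies $h$ as a candidate critical configuration; it gives no lower bound on $\|g\ast g\|_\infty$ for an arbitrary nonnegative $g$, and a local first-variation analysis cannot by itself exclude other critical or limiting configurations with a smaller sup (you acknowledge this too).

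There are also foundational gaps in the variational framework as you set it up. The functional $g\mapsto\|g\ast g\|_\infty$ is not differentiable, the constraint set is not convex in the relevant sense, and --- as you correctly observe --- the conjectured extremizer lies outside $L^1\cap L^2$, so no minimizer exists in the admissible class; the KKT multiplier measure $\nu$ concentrated on the active set therefore cannot be produced by a routine argument for an exact minimizer, and making the statement quantitative for a minimizing \emph{sequence} is precisely what is missing. In short: your diagnosis of where the difficulty lies (and of why the Parseval/kernel machinery of Lemmas~\ref{lem:equality lemma} and~\ref{lem:improvedlower} cannot reach $\pi/2$) is reasonable and consistent with the paper's discussion in Section~\ref{sec:Improvements}, but no part of the submission constitutes a proof of the inequality, which remains open.
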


Dubickas~\cite{Dubickas} gives a sequence of functions supported on $[0,1]$, which take only the values $\pm1$ on that interval, satisfying $\|f\ast f\|_\infty \to 0$. That example shows that the hypothesis of nonnegativity in Conjecture~\ref {conj:piover2} is necessary.

Second, H\"older's inequality gives the upper bound $\|g\|_2^2 \leq \|g\|_\infty \|g\|_1$, which can be an equality if $g$ is a multiple of an indicator function. However, when we apply this inequality in the last step of the proof of Lemma~\ref{lem:circupper}, we apply it to the function $f\ast f$, which seems far from being constant on its support. We conjecture that an improvement is possible here, with the function $h$ again providing the best possible constant. This conjecture would imply that the constant $\fficonstant$ in Theorem~\ref{thm:main} could be improved to $1.3674$.

\begin{conjecture}\label{conj:log16overpi}
If $g\in L^1(\RR) \cap L^2(\RR)$ is nonnegative, then \[ \|g\ast g\|_2^2 \leq \frac{\log 16}{\pi}\,\|g\ast g\|_\infty \,\|g\ast g\|_1.\]
\end{conjecture}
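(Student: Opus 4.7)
The plan is to attack the conjecture by calculus of variations, identifying the function $h$ as a boundary extremizer of the ratio $\|g\ast g\|_2^2/(\|g\ast g\|_\infty \|g\ast g\|_1)$. By the joint scaling invariance under $g(x) \mapsto c_1 g(c_2 x)$, I may normalize $\|g\|_1 = 1$ (so that $\|g\ast g\|_1 = 1$) and reduce the conjecture to
\[
\|g\ast g\|_2^2 \le (\log 16/\pi)\, \|g\ast g\|_\infty
\]
for nonnegative $g \in L^1(\RR) \cap L^2(\RR)$ with $\|g\|_1 = 1$.

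Since $h(x) = (2x)^{-1/2} \One_{(0, 1/2)}(x)$ lies outside $L^2$, the supremum of the ratio is approached but not attained in the admissible class; it is approached along truncations such as $h_K := \min(h, K)$ as $K \to \infty$. I would therefore pass to a compact truncated problem: fix $K > 0$ and an interval $[0, I]$, and maximize the ratio over
\[
\mathcal C_{K, I} := \{g \in L^\infty : g \ge 0,\ \supp g \subseteq [0, I],\ \|g\|_1 = 1,\ \|g\|_\infty \le K\}.
\]
Standard compactness arguments applied to $\mathcal C_{K, I}$ should furnish a maximizer $g_\star = g_\star^{K, I}$ for each fixed $K$ and $I$. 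Letting $F_\star := g_\star \ast g_\star$, $M := \|F_\star\|_\infty$, and $A := \{x : F_\star(x) = M\}$, at a nondegenerate maximizer $A$ has positive measure.

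The heart of the argument is the Euler--Lagrange analysis. Since $d\|g\ast g\|_2^2/dg(z) = 4 \int F_\star(y+z) g_\star(y)\,dy$, handling the three constraints with Lagrange multipliers yields an integral equation of the schematic form
\[
4 \int F_\star(y+z) g_\star(y)\,dy = \lambda + 2 \int g_\star(x-z)\,d\mu(x),
\]
valid for $z$ in the interior of $\supp(g_\star)$ (away from the level set $\{g_\star = K\}$), where $\lambda \in \RR$ and $\mu$ is a nonnegative measure supported on $A$. I would then let $K \to \infty$ and verify that the limiting equation is satisfied by $h$ with $A = (0, 1/2]$ and $\mu$ proportional to Lebesgue measure on $A$---this is at least consistent with the explicit shape of $h\ast h$, which is constant on $(0, 1/2]$ and strictly decreasing on $(1/2, 1)$.

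The main obstacle is making the entire program rigorous. The $L^\infty$ constraint on $F$ and the truncation $\|g\|_\infty \le K$ are both nonsmooth, so identifying the Lagrange measure $\mu$ and the truncation multiplier requires delicate local regularity of $F_\star$. Even granting the integral equation, showing that $h$ is the unique extremizing profile---rather than one critical point among many---requires a monotonicity, convexity, or symmetrization argument that I do not see; rearrangement to a symmetric decreasing function increases both $\|g\ast g\|_2^2$ and $\|g\ast g\|_\infty$, making the ratio's behavior under symmetrization ambiguous. A purely Fourier-analytic alternative---writing the desired inequality as $\|g\ast g\|_\infty \ge (\pi/\log 16)\int|\hat g|^4/\hat g(0)^2$ and exploiting that $|\hat g|^2$ is itself the Fourier transform of the nonnegative function $g \circ g$---would be appealing, but the sharp constant $\log 16/\pi$ (which arises from arctangent integrals in the computation of $\|h\ast h\|_2^2$) suggests that any direct proof will need a delicate special-function identity rather than a general principle.
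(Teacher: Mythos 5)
Be careful about what you are reviewing: the statement you were asked to prove is Conjecture~\ref{conj:log16overpi}, which the paper explicitly presents as an \emph{open problem} in Section~\ref{sec:Improvements}; the paper contains no proof of it, only the computation showing that $h(x)=(2x)^{-1/2}\One_{(0,1/2)}(x)$ attains the ratio $\log 16/\pi$, so that the constant, if correct, would be sharp. Your submission is likewise not a proof but a research program, and you say so yourself. That candor is appropriate, but it means there is a genuine gap --- in fact the gap is the entire content of the conjecture.

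Concretely: the first-variation computation you sketch is fine as far as it goes (the derivative of $\|g\ast g\|_2^2$ in the direction $\varphi$ is indeed $4\int\varphi(z)\int (g\ast g)(y+z)\,g(y)\,dy\,dz$, and the sup-norm constraint contributes a nonnegative multiplier measure on the level set where $g\ast g$ is maximal), and checking that $h$ satisfies the limiting Euler--Lagrange equation would only confirm that $h$ is a critical point --- which is already strongly suggested by the paper's explicit formula for $h\ast h$. What is missing is every step that would turn this into the inequality for \emph{all} nonnegative $g$: (i) existence of maximizers in your truncated classes is not ``standard,'' since the functional $g\mapsto\|g\ast g\|_2^2/\|g\ast g\|_\infty$ is a ratio of a quartic and a sup-norm and neither has the right semicontinuity under weak-$\ast$ limits without further argument; (ii) the double limit $K\to\infty$, $I\to\infty$ (note the conjecture imposes no support restriction at all, unlike Theorem~\ref{thm:main}) must be controlled; and (iii), decisively, you must show $h$ is the \emph{global} extremal profile, not one critical point among many --- you concede you have no monotonicity, uniqueness, or symmetrization mechanism, and indeed symmetric-decreasing rearrangement pushes numerator and denominator in the same direction, so it gives no leverage. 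The paper's own remark that the discrete analogue of this inequality is asymptotically sharp with constant $1$ (via supports on powers of $2$) is a further warning: any proof must exploit genuinely continuous structure, and your proposal does not identify what that structure is. So the proposal neither matches a paper proof (none exists) nor closes the problem; it is a reasonable plan whose hard core --- global optimality of $h$ --- remains untouched.
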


The analogous discrete inequality cannot be improved in this manner. Set $\sum_i q_i x^i=\left(\sum_i^{\deg(p)} p_i x^i\right)^2$, where $p_i\ge 0$; the inequality that we refer to is \(\sum q_i^2 \leq \max q_i \,\cdot\, \sum q_i\).
To see that this cannot be improved, set $p_i=1$ if $i=2^k$  with $0\leq k < N$, and set $p_i=0$ otherwise. In this case, $\sum q_i^2 = 2N^2-N$, while $\max q_i =2$ and $\sum q_i = N^2$, and the inequality is seen to be asymptotically sharp.

Finally, consider a pdf $g$ that is supported on an interval of length~1. Theorem~\ref{thm:main} indicates that $\|g^{\ast h}\|_\infty > \fficonstant / h$ for even integers $h$, while the central limit theorem implies that much more is true: for large integers $h$ there is a constant $c>0$ such that $\|g^{\ast h}\|_\infty \geq c/\sqrt{h}$.
It seems likely that $1/\sqrt h$ is the true rate of decay as $h\to\infty$, at least under some assumption on $g$ such as piecewise continuity, but we have not succeeded in proving this.

\begin{bibdiv}
\begin{biblist}
\bib{twoauthors}{article}{
   author={Berenhaut, K. S.},
   author={Saidak, F.},
   title={A note on the maximal coefficients of squares of Newman polynomials},
   journal={J. Number Theory},
   volume={125},
   date={2007},
   number={2},
   pages={285--288},
   issn={0022-314X},
   review={\MRnumber{2332589}},
}
\bib{CillerueloVinuesa}{article}{
   author={Cilleruelo, Javier},
   author={Vinuesa, Carlos},
   title={$B\sb 2[g]$ sets and a conjecture of Schinzel and Schmidt},
   journal={Combin. Probab. Comput.},
   volume={17},
   date={2008},
   number={6},
   pages={741--747},
   issn={0963-5483},
   review={\MRnumber{2463407}},
}

\bib{Dubickas}{article}{
   author={Dubickas, Art{\=u}ras},
   title={Heights of powers of Newman and Littlewood polynomials},
   journal={Acta Arith.},
   volume={128},
   date={2007},
   number={2},
   pages={167--176},
   issn={0065-1036},
   review={\MRnumber{2314002}},
}
\bib{2001.Green}{article}{
    author={Green, Ben},
     title={The number of squares and $B\sb h[g]$ sets},
   journal={Acta Arith.},
    volume={100},
      date={2001},
    number={4},
     pages={365\ndash 390},
      issn={0065-1036},
    review={\MRnumber{2003d:11033}},
}
\bib{ExperimentalMathematics}{article}{
   author={Martin, Greg},
   author={O'Bryant, Kevin},
   title={The symmetric subset problem in continuous Ramsey theory},
   journal={Experiment. Math.},
   volume={16},
   date={2007},
   number={2},
   pages={145--165},
   issn={1058-6458},
   review={\MRnumber{2339272}},
}

\bib{oldarXiv}{article}{
   author={Martin, Greg},
   author={O'Bryant, Kevin},
   title={Continuous Ramsey Theory and Sidon Sets},
   journal={arXiv:math.NT/0210041},
   date={October 5, 2002},
   pages={66 pages},
}
\bib{10}{book}{
   author={Montgomery, Hugh L.},
   title={Ten lectures on the interface between analytic number theory and harmonic analysis},
   series={CBMS Regional Conference Series in Mathematics},
   volume={84},
   publisher={Published for the Conference Board of the Mathematical Sciences, Washington, DC},
   date={1994},
   pages={xiv+220},
   isbn={0-8218-0737-4},
   review={\MRnumber{1297543}},
}
\bib{Yu}{article}{
   author={Yu, Gang},
   title={An upper bound for $B\sb 2[g]$ sets},
   journal={J. Number Theory},
   volume={122},
   date={2007},
   number={1},
   pages={211--220},
   issn={0022-314X},
   review={\MRnumber{2287120}},
}
\end{biblist}

\end{bibdiv}

\end{document}